\theoremstyle{plain}
\newtheorem{theorem}[equation]{Theorem}
\newtheorem{lemma}[equation]{Lemma}
\newtheorem{proposition}[equation]{Proposition}
\theoremstyle{definition}
\newtheorem{definition}[equation]{Definition}
\theoremstyle{remark}
\newtheorem{remark}[equation]{Remark}
\newtheorem{claim}[equation]{Claim}
\numberwithin{equation}{section}
\newcommand{\dist}{\operatorname{dist}}
\newcommand{\re}{\mathbb{R}}
\newcommand{\rn}{\mathbb{R}^n}
\newcommand{\ree}{\mathbb{R}^{n+1}}
\newcommand{\dd}{\mathbb{D}}
\newcommand{\om}{\Omega}
\renewcommand{\H}{\mathcal{H}}
\newcommand{\W}{\mathcal{W}}
\newcommand{\B}{\mathcal{B}}
\newcommand{\sbf}{{\bf S}}
\newcommand{\G}{\mathcal{G}}
\newcommand{\pom}{\partial\Omega}
\renewcommand{\emptyset}{\mbox{\textup{\O}}}
\DeclareMathOperator{\diam}{diam}
\def\div{\mathop{\operatorname{div}}\nolimits}
\begin{document}
\allowdisplaybreaks

\title[Parabolic CME and Parabolic UR] {Carleson measure estimates for caloric functions and parabolic uniformly rectifiable sets}
\author{S. Bortz}
\address{Department of Mathematics
\\
University of Alabama
\\
Tuscaloosa, AL, 35487, USA}
\email{sbortz@ua.edu}
\author{J. Hoffman}
\address{Department of Mathematics
\\
University of Missouri
\\
Columbia, MO 65211, USA}
\email{jlh82b@mail.missouri.edu}
\author{S. Hofmann}
\address{
Department of Mathematics
\\
University of Missouri
\\
Columbia, MO 65211, USA}
\email{hofmanns@missouri.edu}
\author{J-L. Luna Garcia}
\address{Department of Mathematics
\\
University of Missouri
\\
Columbia, MO 65211, USA}
\email{jlwwc@mail.missouri.edu}
\author{K. Nystr\"om}
\address{Department of Mathematics, Uppsala University, S-751 06 Uppsala, Sweden}
\email{kaj.nystrom@math.uu.se}

\thanks{The authors J. H., S. H., and J-L. L. G.  were partially supported by NSF grants
DMS-1664047 and DMS-2000048. K.N was partially supported by grant  2017-03805 from the Swedish research council (VR)}

\begin{abstract}
Let  $E \subset \mathbb R^{n+1}$ be a parabolic uniformly rectifiable set. We prove that every bounded solution $u$ to
$$\mbox{$\partial_tu- \Delta u=0$ in $\mathbb R^{n+1}\setminus E$,}$$ satisfies a Carleson measure estimate condition. An important
technical novelty of our work is that we develop a corona domain approximation scheme for $E$  in terms of {regular} Lip(1/2,1) graph domains. This approximation scheme has an analogous elliptic version which is an improvement of the known results in that setting.
\end{abstract}

\maketitle
\tableofcontents

\section{Introduction}

For more than forty years, there has been significant interest in quantitative estimates for solutions of (linear) elliptic and parabolic partial differential equations in the absence of smoothness. In this area of research, the lack of
smoothness presents itself in the structure or
regularity of the coefficients of the operator,
or in the geometry of the domain. Recently, sustained efforts in this area have provided characterizations of quantitative geometric notions (e.g. uniform rectifiability) in terms of quantitative estimates for harmonic functions \cite{GMT, HMM1} and a geometric characterization of the $L^p$-solvability of the Dirichlet problem \cite{AHMMT}.   This paper concerns the parabolic analog of \cite{HMM1} and overcomes the substantial difficulty
introduced by the distinguished time direction and the anisotropic scaling.
To deal with this difficulty,
we are forced to build appropriate approximating domains with better properties
than would be enjoyed by the parabolic analogues of the chord-arc domains constructed in \cite{HMM1}.
In particular, our construction improves on that of \cite{HMM1}, even in the elliptic setting.
We shall discuss these issues in more detail momentarily.

We shall prove the following.

\begin{theorem}[A Carleson Measure Estimate for Bounded Caloric Functions]\label{PURCMEMT.thrm}
Let $n \ge 2$. Let $E \subset \ree$ be a set which is uniformly rectifiable in the parabolic sense. Then for any solution to $(\partial_t - \Delta_X)u = 0$ in $\ree \setminus E$ with $u \in L^\infty(\ree \setminus E)$ it holds that
\begin{equation}\label{PURCMEMTeq.eq}
\sup_{(t,X) \in E, r > 0} r^{-n-1}
\iint\limits_{B((t,X),r)} |\nabla u|^2 \delta(s,Y) \,  dY \, ds \le C \|u\|_{L^\infty(E^c)}^2,
\end{equation}
where $\delta(s,Y)  :=\dist((s,Y), E)$, and $C$ depends only
dimension and the parabolic UR constants for $E$.
\end{theorem}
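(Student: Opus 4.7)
The plan is to reduce the global Carleson measure estimate on $E$ to a family of local estimates on approximating graph-type domains and then glue these via a Carleson packing argument. The starting point is the parabolic UR hypothesis, which yields a corona-type decomposition of the parabolic dyadic cubes on $E$ into a disjoint union of stopping-time regimes $\sbf$, each associated to a ``top cube'' $Q_{\sbf}$, in such a way that the collection $\{Q_{\sbf}\}$ satisfies a Carleson packing condition. On each regime, $E$ is well-approximated at every scale and location in $\sbf$ by a parabolic regular Lip$(1/2,1)$ graph.

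The main technical step, and the principal novelty of the paper, is to construct for each $\sbf$ a parabolic sawtooth-type region $\osp$ such that (i) $\osp$ lies in one of the two components of $\ree\setminus E$ adjacent to $Q_{\sbf}$; (ii) $\posp$ agrees with $E$ on a controlled-measure portion of $Q_{\sbf}$, up to a non-contact piece that is lifted off $E$ by a fixed fraction of scale; and (iii) $\osp$ is itself a regular Lip$(1/2,1)$ graph domain, with constants depending only on dimension and the parabolic UR data for $E$. Condition (iii) is the subtle one and is where the scheme improves on the elliptic construction of \cite{HMM1}. The anisotropic scaling prevents one from simply pasting together Lipschitz-in-space interpolants of best-fit planes, because the resulting boundary would not in general have its half-order time derivative in parabolic BMO, i.e., would fail to be \emph{regular}. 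To remedy this, I would define $\osp$ through a graph function $\psi_{\sbf}$ obtained by smoothing a parabolic distance-like function associated to $\sbf$ against a parabolic mollifier at each scale, combined with scale-adapted cutoffs, and then verify scale-by-scale Carleson bounds for both $\nabla_X\psi_{\sbf}$ and the half-order time derivative $H_t^{1/2}\psi_{\sbf}$. This will be the hardest part of the argument.

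With the approximating domains in hand, one applies to $u\big|_{\osp}$ the already-known Carleson measure estimate for bounded caloric functions on regular Lip$(1/2,1)$ graph domains (the parabolic analog of Dahlberg's square function bound). Pulling the estimate back to $\osp$ and restricting to the contact portion of $\posp$, where $\dist(\cdot,\posp)$ is comparable to $\delta(\cdot)$, yields
\[
\iint_{\osp}|\nabla u|^2\,\delta(s,Y)\,dYds \;\lesssim\; \|u\|_{\infty}^2\,\mu(Q_{\sbf}),
\]
where $\mu$ denotes the parabolic surface measure on $E$.

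To conclude, fix $(t,X)\in E$ and $r>0$, and decompose $B((t,X),r)\setminus E$ via the parabolic Whitney cubes. Assign each such Whitney cube to the regime $\sbf$ whose top cube contains its nearest boundary cube, and apply the local estimate above. The Carleson packing of $\{Q_{\sbf}\}$ then sums the resulting bounds to $C\|u\|_{\infty}^2 r^{n+1}$, which is exactly \eqref{PURCMEMTeq.eq}. I expect the construction of the regular Lip$(1/2,1)$ approximating sawtooth domains, together with the scale-by-scale BMO estimate for $H_t^{1/2}\psi_{\sbf}$, to be the principal obstacle; the reduction and the packing summation are largely standard once such a scheme is available.
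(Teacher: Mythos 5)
Your proposal follows essentially the same route as the paper: reduce to a bilateral corona decomposition of $\mathbb{D}(E)$ into stopping-time regimes, build for each regime a pair of approximating regular Lip$(1/2,1)$ graph domains, invoke the known local Carleson measure estimate for bounded caloric functions on such domains, and close by the Carleson packing of top cubes plus Caccioppoli for the bad cubes. You correctly isolate the crux: unlike the elliptic case, the approximating boundary must carry a $P$-BMO bound on its half-order time derivative, so naive sawtooth or best-plane pastings fail. The one substantive difference lies in how that regularity is secured. You propose mollifying a stopping-time-distance function at each scale with scale-adapted cutoffs, and then proving scale-by-scale Carleson bounds for $\nabla_{x'}\psi_{\sbf}$ and $D_t^{1/2}\psi_{\sbf}$ from scratch. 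The paper instead exploits that the corona graph function $f$ is \emph{already} regular Lip$(1/2,1)$: it defines $\psi^{\pm}_{\eta,\sbf}=f\pm\eta^{15/16}H$, where $H$ is a Stein-type regularized distance built from a Whitney decomposition of the zero set of the stopping-time distance $d[F(\cdot)]$ (Lemma \ref{regL121.lem}), and the $P$-BMO bound for $D_t^{1/2}H$ is derived via a $\beta$-number Carleson estimate. Because $H$ enters only with a small coefficient $\eta^{15/16}$, the regularity of $\psi^{\pm}_{\eta,\sbf}$ is inherited from $f$ plus a perturbation, rather than established ab initio. Two further points your sketch elides but which are nontrivial in the paper: both sides of $E$ must be handled simultaneously since $u$ solves the equation in all of $E^c$, so one needs the pair $\Gamma^{\pm}_{\sbf}$ and the splitting $\W_Q=\W_Q^+\cup\W_Q^-$ (Lemmas \ref{WhitneyaboveGamma.lem}--\ref{whitneygammaplus.lem}); and one must verify that the graph domain near $Q_{\sbf}$ actually sits inside $\cup_{Q\in\sbf}U_Q^*$ so that $u$ is indeed a solution there (Lemma \ref{domainsinbigcubes.lem}), which uses the ``lifting'' $\pm\eta^{\alpha}d[F]$ and Lemma \ref{dgcloseeta.lem} to ensure $E$ lies strictly on the wrong side of the lifted graph.
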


Here, and below,
$\dist((s,Y), E)$ is the parabolic distance from $(s,Y)$ to the given set $E$,  and the ball $B((t,X),r)$
is defined with respect to the parabolic metric; see \eqref{paradist} and \eqref{paraball} below.

In the case that $\om$ is an open set, the following holds.
\begin{theorem}\label{PURCMEMT2.thrm}
Let $n \ge 2$.  Let $\om \subset \ree$ be an open set for which $\pom$ is uniformly rectifiable in the parabolic sense. Then for any solution to $(\partial_t - \Delta_X)u = 0$ in $\om$ with $u \in L^\infty(\om)$ it holds that
\[\sup_{(t,X) \in E, r > 0} r^{-n-1}
\iint\limits_{B((t,X),r) \cap \om} |\nabla u|^2 \delta(s,Y) \,  dY \, ds \le C \|u\|_{L^\infty(\om)}^2,\]
where $C$ depends only dimension and the parabolic UR constants for $\pom$. Here,
$\delta(s,Y)  := \dist((s,Y), \pom)$ the parabolic distance to $\pom$.
\end{theorem}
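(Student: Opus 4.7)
The plan is to deduce Theorem~\ref{PURCMEMT2.thrm} directly from Theorem~\ref{PURCMEMT.thrm} by extending $u$ across $\pom$ to a bounded caloric function on the complement of $E := \pom$. By hypothesis, $E$ is parabolic uniformly rectifiable, so Theorem~\ref{PURCMEMT.thrm} applies with this choice of $E$, and the function $\delta(s,Y) = \dist((s,Y), \pom)$ appearing in Theorem~\ref{PURCMEMT2.thrm} is exactly the distance $\dist((s,Y), E)$ appearing in~\eqref{PURCMEMTeq.eq}.

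The key observation is that $\om$ and $\ree \setminus \overline{\om}$ are disjoint open sets whose union is exactly $\ree \setminus E$. Accordingly I would define
\[
\tilde u(s,Y) := \begin{cases} u(s,Y), & (s,Y) \in \om, \\ 0, & (s,Y) \in \ree \setminus \overline{\om}. \end{cases}
\]
On each of these two open pieces $\tilde u$ is a classical caloric function (by hypothesis on $\om$, trivially on $\ree \setminus \overline{\om}$), so $\tilde u$ is caloric on the open set $\ree \setminus E$. Moreover, $\|\tilde u\|_{L^\infty(E^c)} = \|u\|_{L^\infty(\om)}$.

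Applying Theorem~\ref{PURCMEMT.thrm} to $\tilde u$ then yields
\[
\sup_{(t,X)\in E,\, r>0}\, r^{-n-1}\iint\limits_{B((t,X),r)} |\nabla \tilde u|^2\, \delta(s,Y)\, dY\, ds \;\le\; C\,\|u\|_{L^\infty(\om)}^2.
\]
Since $\nabla \tilde u \equiv 0$ on the open set $\ree \setminus \overline{\om}$, the integrand is supported in $\om$, so the integral on the left coincides with $\iint_{B((t,X),r)\cap \om} |\nabla u|^2\,\delta(s,Y)\, dY\, ds$. This is precisely the estimate claimed in Theorem~\ref{PURCMEMT2.thrm}.

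The only point requiring any verification is that the two-piece function $\tilde u$ is genuinely caloric on all of $\ree \setminus E$, which is immediate from the fact that $\om$ and $\ree \setminus \overline{\om}$ are disjoint open sets whose union equals $\ree \setminus \pom$. I therefore foresee no substantive obstacle: the entire content of Theorem~\ref{PURCMEMT2.thrm} is subsumed in Theorem~\ref{PURCMEMT.thrm} via this trivial extension.
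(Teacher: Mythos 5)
Your proof is correct and takes a genuinely different, substantially shorter route than the paper. The paper's own treatment (stated only in a brief remark after the proof of Theorem~\ref{PURCMEMT.thrm}) re-runs the entire machinery with $\W(\om)$ in place of $\W(E^c)$ and must then address minor complications, e.g.\ that without a corkscrew hypothesis some Whitney regions $U_Q$ may be empty. By contrast, you reduce Theorem~\ref{PURCMEMT2.thrm} to Theorem~\ref{PURCMEMT.thrm} by the extension-by-zero trick, and all the bookkeeping about Whitney regions adapted to $\om$ disappears. The reduction is sound: since $\om$ is open, $\ree\setminus\pom$ is the disjoint union of the two open sets $\om$ and $\ree\setminus\overline{\om}$, so $\tilde u$ (equal to $u$ on the former, $0$ on the latter) is caloric on $\ree\setminus E$ with $E:=\pom$; and since P-UR implies ADR, the set $E$ has vanishing $(n+2)$-dimensional parabolic Hausdorff (i.e.\ Lebesgue) measure, so dropping $E$ from the domain of integration does not affect the Carleson integral, and replacing the ball $B((t,X),r)$ by $B((t,X),r)\cap\om$ loses nothing because $\nabla\tilde u\equiv 0$ off $\overline{\om}$. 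What your approach \emph{buys} is brevity and conceptual clarity; what the paper's intrinsic proof arguably buys is a demonstration that the argument lives entirely on $\om$ (which can matter in variants where no information about the exterior is available, or for more general operators where extension by zero is not automatic), though in the present setting the two yield the same theorem with the same constants.
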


The notion of parabolic uniform rectifiability was introduced in \cite{HLN-BP,HLN1} and is defined below, but we provide some context here. In a series of works J. Lewis and his collaborators \cite{H1,H,HL,HL-JFA, LM, LS}, showed that the `good' parabolic graphs for parabolic singular integrals and parabolic potential theory are regular Lip(1/2,1) graphs, that is, graphs which are Lip(1/2,1) (in time-space coordinates) and which possess extra regularity in time in the sense that a (non-local) half-order time-derivative of the defining function of the graph is in the space of functions of parabolic bounded mean oscillation. This is in contrast to the elliptic setting, where one often
views Lipschitz graphs as the `good' graphs for singular integrals and potential theory (because of the works \cite{CMM, CDM, CS, Dahlberg, D4}), and where
the $BMO$ estimate for the gradient is an automatic
consequence of Rademacher's theorem and the inclusion of $L^\infty$ in $BMO$.  The definition of parabolic uniform rectifiability in \cite{HLN-BP,HLN1} is given in terms of parabolic ``$\beta$ numbers\footnote{These $\beta$ numbers can be traced back to the work of P. Jones \cite{Jones}.}"  but in this paper we do not work with this definition directly. Instead, we here work with an equivalent notion of parabolic uniform rectifiability in terms of the existence of appropriate corona decompositions recently established in \cite{BHHLN1, BHHLN2}. However, it is worth remarking that the graph of a Lip(1/2,1) function is parabolic uniformly rectifiable if and only if the function has a half-order time derivative in parabolic BMO. In contrast to the case of `elliptic' uniform rectifiablity, which has reached a state of maturity that includes numerous interesting characterizations, this is not the case for parabolic uniform rectifiablity. In fact, beyond \cite{HLN-BP,HLN1} the only correct and more
systematic studies of parabolic uniformly rectifiable sets can be found in \cite{BHHLN1, BHHLN2}\footnote{There are works of J. Rivera-Noriega in this area, but these articles have significant gaps or no proofs. Some of these gaps are outlined in \cite{BHHLN1, BHHLN2, BHHLN3}.}.   In \cite{BHHLN1, BHHLN2} parabolic uniform rectifiability is characterized in terms of a bilateral coronaization by regular Lip(1/2,1) graphs (Lemma \ref{bilateralcorona.lem}), and this characterization is the starting point for the analysis in this paper. In general there are many interesting open problems in this and related areas, and it should be emphasized that parabolic uniform rectifiability is {\it significantly} different to
its `elliptic' counterpart, see \cite[Observation 4.19]{BHHLN1}.

To give an idea of the methods involved in the proof of Theorem \ref{PURCMEMT.thrm}, the primary novelty of our work is a `corona domain approximation scheme' (Proposition \ref{stopdomainconst.prop}) in terms
of {regular} Lip(1/2,1) graph {\it domains}. This is in contrast to the (elliptic) NTA domain
approximations produced in \cite{HMM1}  for uniformly rectifiable sets. In fact, our proof here carries over without modification\footnote{Except that the technical Lemma \ref{regL121.lem} is no longer needed.} to the elliptic setting, providing an (improved) approximation by Lipschitz domains. In \cite{HMM1} the authors use Whitney cubes to construct these NTA domains using `dyadic sawtooths' and exploiting an elliptic bilateral corona decomposition. The heuristic in the elliptic setting is that these `sawtooth domains' inherit many essential properties of the original
 boundary. In contrast, in the parabolic setting the analogous constructions do not
necessarily inherit even the most basic properties. One of the most
readily apparent difficulties in the parabolic setting comes from the fact that the natural `lower dimensional parabolic measure' can easily fail to see relatively nice sets. In particular, given a cube (with respect to the standard coordinates) in $\ree$,
two of the faces (those orthogonal to the time axis)
have zero natural `parabolic surface measure', which says that, not only does the boundary of a cube fail to be uniformly rectifiable in the parabolic sense, it fails even to have the Ahlfors-David regularity property. The method outlined in this paper circumvents this difficulty by `lifting' the graphs in the parabolic bilateral corona decomposition (Lemma \ref{bilateralcorona.lem}) in a manner that respects the stopping time regimes and thereby produces the graph domains rather directly. We also point out that, while the analogous elliptic results (in \cite{HMM1}) proceed along the lines of `extrapolation of Carleson measures' it was later seen in \cite{HMM2} that this was unnecessary and a more direct approach is available. Therefore, upon proving Proposition \ref{stopdomainconst.prop}, the proof of Theorem \ref{PURCMEMT.thrm} proceeds as in \cite{HMM2}.

Let us provide some motivation for the estimate in Theorem \ref{PURCMEMT.thrm}. As remarked above, (elliptic) uniform rectifiability has been characterized by various properties of harmonic functions and among these characterizations is the elliptic version of the Carleson measure estimate in Theorem \ref{PURCMEMT.thrm} (see \cite{GMT, HMM1}).  We therefore expect that
the estimate in Theorem \ref{PURCMEMT.thrm} is a significant step in characterizing parabolic uniform rectifiability by properties of caloric functions.  We suspect that additional considerations/conditions will need to be made, as was the case for non-symmetric operators in the elliptic setting \cite{AGMT}, in the converse, `free-boundary' direction, due to the lack of self adjointness of the heat operator.
In domains that are sufficiently nice topologically,
the estimate in Theorem \ref{PURCMEMT.thrm} (and its elliptic analogue) is also intimately
tied to the solvability of the $L^p$-Dirichlet boundary value problem in the parabolic setting \cite{DPP,GH} (see \cite{KKoPT,KKiPT}, and  related
work in \cite{DKP, HLe, Zhao} for the elliptic theory). Indeed, in the case of
regular Lip(1/2,1) graph domains it is known that estimate \eqref{PURCMEMTeq.eq} for bounded null-solutions to
general parabolic operators of the form ${\mathcal{L}} = \partial_t - \div_X A \nabla_X$ is equivalent to the solvability
of the $L^p$-Dirchlet boundary value problem for some $p> 1$ \cite{DPP} (boundary value problem
means the data is prescribed on the lateral boundary). In fact, merely\footnote{In particular,
without assuming
that the domain is the region above a regular Lip(1/2,1) graph.} assuming
parabolic Ahlfors-David regularity and a `backwards thickness condition' (also
of Ahlfors-David regular type),
the solvability of the $L^p$-Dirichlet boundary value problem is implied by a
stronger estimate where the $L^\infty$ norm on the right hand side of \eqref{PURCMEMTeq.eq} is replaced by the (boundary) $BMO$ norm of the data, see \cite{GH}. This stronger estimate is unlikely\footnote{The elliptic analogue does not hold (in general) in the complement of uniformly rectifiable set.} to hold in the present setting due to the lack of (non-tangential) accessibility to the boundary.

The rest of this paper is organized as follows. In Section \ref{prelim.sect} we introduce the notions and notation used throughout the paper. In Section \ref{domainapprox.sect} we construct approximating domains, each adapted to a particular stopping time regime in the parabolic bilateral corona decomposition, Lemma \ref{bilateralcorona.lem}. In Section \ref{proofmt.sect} we prove the main theorems of the paper (Theorems \ref{PURCMEMT.thrm} and \ref{PURCMEMT2.thrm}) using the constructions produced in Section \ref{domainapprox.sect}. In Section \ref{FR.sect} we discuss some possible extensions of the results here.

\section{Preliminaries}\label{prelim.sect}
Throughout this paper, we work in $\ree$ identified\footnote{We apologize for the departure from the usual notation $(X,t)$, but we will often be working with graphs and it is convenient to have the `last' variable as the graph variable.} with $\re \times \rn = \{(t,X): t \in \re, X \in \rn\}$ and $n \in \mathbb{N}$, $n \ge 2$. We use the notation
\begin{equation}\label{paradist}
\dist(A,B) := \inf_{(t,X) \in A, (s,Y) \in B} |X -Y| + |t-s|^{1/2},
\end{equation}
to denote the parabolic distance between  $A$ and $B$, $A,B \subseteq \ree$. We also use the notation $B((t,X),r)$ for the parabolic ball centered at $(t,X)$ with radius $r > 0$, that is,
\begin{equation}\label{paraball}
B((t,X),r)  := \{(s,Y): \dist((t,X), (s,Y)) < r\}.
\end{equation}
Given a $E \subset \mathbb{R}^{n+1}$ we let $\diam(E)$ denote its the diameter, or parabolic diameter, defined with respect to the parabolic metric.
\begin{definition}[Parabolic Hausdorff measure]
Given $s > 0$ we let $\H^s_p$ denote the $s$-dimensional parabolic Hausdorff measure. More specifically, for a set $E \subset \mathbb{R}^{n+1}$ and $\epsilon > 0$ we define
\[\H^s_{p,\epsilon}(E) := \inf\left\{\sum_i \diam(E_i)^s: E \subseteq \cup_i E_i, \diam(E_i) \le \epsilon \right\},\]
and
\[\H^s_{p}(E) := \lim_{\epsilon \to 0^+} \H^s_{p,\epsilon}(E) = \limsup_{\epsilon \to 0^+} \H^s_{p,\epsilon}(E).\]
\end{definition}

The following family of planes will be important in this work.
\begin{definition}[$t$-independent planes]
We say that an $n$-dimensional plane $P$ in $\ree$ is $t$-independent if it contains a line in the $t$-direction. Equivalently, if $\vec{\nu}$ is the normal vector to $P$ then $\vec{\nu} \cdot (1,\vec{0}) = 0$.
\end{definition}

The following local energy (Caccioppoli) inequality holds for solutions to the heat equation.
\begin{lemma}[Caccioppoli Inequality]\label{cacc}
Let $B = B((t,X),r)$ and suppose that $u$ is a solution to $(\partial_t - \Delta_X)u = 0$ in $(1 + \alpha)B$, for some $\alpha > 0$ then
\[\int_{B} |\nabla_X u(t,X)|^2 \, dX\, dt \lesssim r^{-2} \int_{(1+ \alpha)B} |u|^2 \, dX\, dt, \]
where the implicit constant depends on dimension and $\alpha$.
\end{lemma}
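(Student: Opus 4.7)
The plan is to prove this standard parabolic Caccioppoli inequality via the usual cutoff-and-integrate-by-parts scheme, adapted to the parabolic metric. Let $B = B((t,X),r)$ and $B' = (1+\alpha)B$. First I would choose a smooth cutoff function $\eta$ on $\ree$ with $0 \le \eta \le 1$, $\eta \equiv 1$ on $B$, $\supp\eta \subset B'$, and satisfying the parabolic-scale bounds $|\nabla_X \eta| \lesssim 1/r$ and $|\partial_t \eta| \lesssim 1/r^2$. Such an $\eta$ exists because the parabolic ball $B$ has spatial extent $\sim r$ and temporal extent $\sim r^2$; one can build it as a tensor product of one-dimensional bumps at the appropriate scales.

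Next I would test the equation $(\partial_t - \Delta_X)u = 0$ against $\eta^2 u$ and integrate over $B'$. The time-derivative term rearranges as
\[
\iint \eta^2 u\, \partial_t u \,dX\,dt = \tfrac{1}{2}\iint \partial_t\bigl(\eta^2 u^2\bigr)\,dX\,dt - \iint \eta\,(\partial_t \eta)\,u^2\,dX\,dt,
\]
and the first term on the right vanishes because $\eta$ is compactly supported in time. For the spatial term, integrating by parts in $X$ (the boundary terms vanish by compact support of $\eta$) gives
\[
-\iint \eta^2 u\,\Delta_X u\,dX\,dt = \iint \eta^2 |\nabla_X u|^2\,dX\,dt + 2\iint \eta\, u\,\nabla_X \eta \cdot \nabla_X u\,dX\,dt.
\]

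Combining these and applying Cauchy--Schwarz together with Young's inequality to the cross term $2\iint \eta u \,\nabla_X\eta\cdot\nabla_X u$ absorbs half of $\iint \eta^2|\nabla_X u|^2$ into the left-hand side, leaving
\[
\iint \eta^2 |\nabla_X u|^2\,dX\,dt \;\lesssim\; \iint \bigl(\eta|\partial_t \eta| + |\nabla_X \eta|^2\bigr) u^2\,dX\,dt.
\]
Plugging in the bounds $|\partial_t\eta|\lesssim r^{-2}$ and $|\nabla_X\eta|^2\lesssim r^{-2}$, both contributions are $\lesssim r^{-2}$, and restricting the left-hand side to $B$ (where $\eta\equiv 1$) yields the claimed inequality, with an implicit constant depending only on $n$ and $\alpha$ (through the construction of $\eta$).

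There is no serious obstacle; the only point requiring a brief sanity check is that the parabolic scaling is respected by the cutoff, so that the time-derivative term produces the same $r^{-2}$ factor as the spatial gradient term rather than $r^{-1}$, which is exactly why one chooses $|\partial_t\eta|\lesssim r^{-2}$ adapted to the parabolic extent of $B$.
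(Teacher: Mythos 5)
The paper states Lemma~\ref{cacc} without proof, treating it as the standard interior Caccioppoli estimate for the heat equation. Your proposal is the usual cutoff-and-integrate-by-parts argument, correctly adapted to the parabolic scaling (choosing $|\partial_t\eta|\lesssim r^{-2}$ so that the time term matches the $|\nabla_X\eta|^2$ term), and it is correct and complete; in particular the identity $\iint\eta^2 u\,\partial_t u = -\iint\eta(\partial_t\eta)u^2$ after discarding the total time derivative, together with Young's inequality on the cross term, closes the argument. One harmless remark: since caloric functions are interior-smooth by hypoellipticity, the formal manipulations (testing with $\eta^2 u$, integrating by parts) are automatically justified and no mollification step is needed.
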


\begin{definition}[Ahlfors-David Regular]\label{def1.ADR}
We say a $E \subset \ree$ is (parabolic) Ahlfors-David regular, written $E$ is ADR, if it is closed and there exists a constant $C > 0$ such that
\[C^{-1} r^{n+1} \le \H^{n+1}_p(B((t,X),r) \cap E) \le C r^{n+1}, \quad \forall (t,X) \in E, r \in (0, \diam(E)).\]
\end{definition}

We will call the $C$ of Definition \ref{def1.ADR} the Ahlfors-David regularity constant and if a particular constant depends on the Ahlfors-David regularity constant, we will say {that the} constant `depends on ADR'. We will sometimes write $\sigma: = \H^{n+1}_p|_{E}$, to denote the `surface measure' on $E$. (The underlying set defining $\sigma$ will always be clear from context.)

An ADR set $E$ can be viewed as a space of homogeneous type $(E,\dist, \sigma)$, with homogeneous dimension $n+1$. All such sets have a nice filtration, which we will refer to as the `dyadic cubes' on $E$.

\begin{lemma}[\cite{Ch,DS1,DS2, hyt-k}]\label{cubes}  Assume that $E  \subset \mathbb R^{n+1}$ is (parabolic) ADR  in the sense of Definition \ref{def1.ADR} with constant $C$. Then $E$ admits a parabolic dyadic decomposition in the sense that there exist
constants $a_0>0,\, \gamma>0$ and $c_*<\infty$,  such that for each $k \in \mathbb{Z}$
there exists a collection of Borel sets, $\mathbb{D}_k$,  which we will call (dyadic) cubes, such that
$$
\mathbb{D}_k:=\{Q_{j}^k\subset E: j\in \mathfrak{I}_k\},$$ where
$\mathfrak{I}_k$ denotes some (countable)  index set depending on $k$, satisfying
\begin{eqnarray*}
(i)&&\mbox{$E=\cup_{j}Q_{j}^k\,\,$ for each
$k\in{\mathbb Z}$.}\notag\\
(ii)&&\mbox{If $m\geq k$ then either $Q_{i}^{m}\subset Q_{j}^{k}$ or
$Q_{i}^{m}\cap Q_{j}^{k}=\emptyset$.}\notag\\
(iii)&&\mbox{For each $(j,k)$ and each $m<k$, there is a unique
$i$ such that $Q_{j}^k\subset Q_{i}^m$.}\notag\\
(iv)&&\mbox{$\diam\big(Q_{j}^k\big)\leq c_* 2^{-k}$.}\notag\\
(v)&&\mbox{Each $Q_{j}^k$ contains $E\cap B((t^k_j,Z^k_{j}), a_02^{-k})$ for some $(t^k_j,Z^k_{j})\in E$.}\notag\\
(vi)&&\mbox{$E(\{(t,Z)\in Q^k_j: \dist((t,Z),E\setminus Q^k_j)\leq \varrho \,2^{-k}\big\})\leq
c_*\,\varrho^\gamma\,E(Q^k_j),$}\notag\\
&&\mbox{for all $k,j$ and for all $\varrho\in (0,\alpha)$.}
\end{eqnarray*}
\end{lemma}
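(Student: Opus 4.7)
The plan is to reduce the statement to the standard dyadic cube construction on a doubling metric measure space. First I would verify that the parabolic distance \eqref{paradist} is a genuine metric on $\ree$: non-negativity, symmetry, and the definiteness condition are immediate, while the triangle inequality follows from $(a+b)^{1/2} \le a^{1/2} + b^{1/2}$ for $a,b \ge 0$. Restricted to $E$, this gives us a genuine metric space $(E,\dist|_{E\times E})$, which is one of the two ingredients needed to apply the cited constructions.

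The second ingredient is that the ``surface measure'' $\sigma := \H^{n+1}_p\big|_E$ is a doubling Borel measure, with doubling constant controlled by the ADR constant $C$ of Definition~\ref{def1.ADR}. This is immediate from the two-sided bound in Definition \ref{def1.ADR}: for any $(t,X)\in E$ and $0<r<\diam(E)$,
\[
\sigma\big(B((t,X),2r)\cap E\big) \le C (2r)^{n+1} = 2^{n+1} C \cdot C^{-1}r^{n+1}\cdot C \le 2^{n+1}C^2\,\sigma\big(B((t,X),r)\cap E\big).
\]
Hence $(E,\dist,\sigma)$ is a space of homogeneous type with homogeneous dimension $n+1$, and, crucially, the underlying quasi-metric is an honest metric, so one is squarely in the setting of \cite{Ch,DS1,DS2,hyt-k}.

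At this point I would simply invoke M.~Christ's construction (or the refinement of Hyt\"onen--Kairema) to produce a family $\{Q_j^k\}_{k\in\ZZ,\,j\in \mathfrak{I}_k}$ of Borel subsets of $E$ satisfying the nested partition properties (i)--(iii), the upper diameter bound (iv), and the existence of an ``inscribed ball'' (v), all with constants depending only on the doubling constant of $\sigma$ (equivalently, on the ADR constant $C$). The scale parameter $2^{-k}$ (rather than some other fixed base) and the fact that the inscribed ball has radius comparable to $2^{-k}$ are built into the construction; one just needs to note that $\dist$ is balanced (the parabolic dilation $(t,X)\mapsto(\lambda^2 t,\lambda X)$ scales $\dist$ by $\lambda$), so the standard Euclidean/metric-space formulation of the lemma transports to the parabolic setting without modification.

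The step I expect to require the most care is the small-boundary (thin-boundary) estimate (vi), since properties (i)--(v) admit a one-line greedy proof (maximal packings at scale $2^{-k}$, assigned to ancestors by ``nearest parent''), while (vi) is the genuine content of Christ's theorem. Here I would follow the original argument: for each $k,j$ and $\ell\ge 1$, bound the $\sigma$-measure of the annular layer
\[
A_\ell := \{(t,Z)\in Q_j^k : 2^{-k-\ell-1} \le \dist((t,Z),E\setminus Q_j^k)<2^{-k-\ell}\}
\]
by a geometric factor $\tau^\ell\,\sigma(Q_j^k)$ with some $\tau\in(0,1)$ depending on the doubling/ADR constants; summing over the $\ell$ for which $2^{-k-\ell}\ge \varrho 2^{-k}$ then yields (vi) with $\gamma = \log(1/\tau)/\log 2$. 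The per-layer geometric decay comes precisely from the careful choice of centers in the construction of the cubes and an iterated application of the doubling property; everything else in the lemma is soft.
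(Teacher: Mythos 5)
The paper gives no proof of this lemma; it is simply cited to \cite{Ch,DS1,DS2,hyt-k}, and your plan---verify that the parabolic distance is an honest metric, that $\sigma=\H^{n+1}_p|_E$ is doubling with constant controlled by the ADR constant, and then invoke the Christ/Hyt\"onen--Kairema dyadic construction on the resulting space of homogeneous type---is exactly the intended route. Your sketches of the triangle inequality, the doubling bound, and the geometric-decay argument for the thin-boundary property (vi) are all correct and match what those references deliver.
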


\begin{remark} We denote by  $\mathbb{D}=\mathbb{D}(E)$ the collection of all $Q^k_j$, i.e., $$\mathbb{D} := \cup_{k} \mathbb{D}_k.$$ Given a cube $Q\in \mathbb{D}$, we set
 \begin{equation*}\label{eq2.discretecarl}
\mathbb{D}_Q:= \left\{Q'\in \mathbb{D}: Q'\subseteq Q\right\}.
\end{equation*}
For a dyadic cube $Q\in \mathbb{D}_k$, we let $\ell(Q) := 2^{-k}$, and we will refer to this quantity as the size or side-length
of $Q$.  Evidently, $\ell(Q)\sim\diam(Q)$ with constant of comparison depending at most on $n$ and $C$.
Note that $(iv)$ and $(v)$ of Lemma \ref{cubes} imply that for each cube $Q\in\mathbb{D}_k$,
there is a point $(t_Q,X_Q) \in E$, and  a ball $B((t_Q,X_Q), r)$ such that
$r\approx2^{-k} \approx\diam (Q)$
and \begin{equation}\label{cube-ball}
E\cap B((t_Q,X_Q),r) \subset Q \subset E\cap B((t_Q,X_Q),Cr) \end{equation}
for some uniform constant $C$. We shall refer to the point $(t_Q,X_Q)$ as the center of $Q$. Given a dyadic cube $Q\subset E$ and $K>1$, we define the $K$ `dilate' of $Q$  by
\begin{equation}\label{dilatecube}
K Q:= \{(t,X) \in E: \dist((t,X),E) < (K-1)\diam(Q)\}.
\end{equation}
\end{remark}

Throughout the paper we assume that $E$ is uniformly rectifiable in the parabolic sense.
 We nominally define this notion in language that will be meaningful to
those intimately familiar with the work of David and Semmes,
but we will here not discuss and introduce all the relevant terminology
 (the interested reader may consult  \cite[Definition 4.]{BHHLN1}),  as it will not be used in the present work.
In fact, the reader can safely ignore the following definition, as parabolic uniform rectifiability is equivalent to the existence of a bilateral corona decomposition \cite[Theorem 3.3]{BHHLN2}
(see Lemma \ref{bilateralcorona.lem} below)  and the latter is the formulation of parabolic uniform rectifiability that we will actually use throughout the paper.

\begin{definition}[Uniformly Rectifiable in the Parabolic sense (P-UR)]
We say a set $E \subset \ree$ is uniformly rectifiable in the parabolic sense (P-UR) if $E$ is ADR and satisfies the $(2,2)$ geometric lemma with respect to $t$-independent planes and the measure $\H^{n+1}_p$. See \cite[Definition 4.1]{BHHLN1}\footnote{In \cite{BHHLN1}, a different measure was used in place of $\H^{n+1}_p$, but these measures are equivalent when the set $E$ is P-UR (with respect to either measure). See \cite[Corollary B.2]{BHHLN2}. }. We say that a constant depends on P-UR if it depends on the ADR and Carleson measure constant in the definition of the $(2,2)$ geometric lemma (with respect to $t$-independent planes and the measure $\H^{n+1}_p$).
\end{definition}

In order to state the bilateral corona decomposition, we need to define regular Lip(1/2,1) graphs and coherent subsets of dyadic cubes.

\begin{definition}[Regular Lip(1/2,1) graphs]
We say that $\Gamma$ is a regular Lip(1/2,1) graph if there exists a $t$-independent plane $P$ and a function $\psi: P \to P^\perp$ such that,
\[\Gamma = \{(p, \psi(p)): p \in P\},\]
where, upon identifying $P$ with $\rn = \re \times \re^{n-1} = \{(t,x'): t \in \re, x' \in \re^{n-1}\}$, there exists two constants $b_1,b_2$ such that $\psi$ has two properties:
\begin{itemize}
\item $\psi$ is a Lip(1/2,1) function with constant bounded by $b_1$, that is
\[|\psi(t,x') - \psi(s,y')| \le b_1(|x' - y'| + |t -s|^{1/2}), \quad \forall (t,x'), (s,y') \in \rn.\]
\item $\psi$ has a half-order time derivative in parabolic-$BMO$ with parabolic-$BMO$ norm bounded by $b_2$, that is,
\[\|D_t^{1/2} \psi\|_{P\text{-}BMO(\rn)} \le b_2, \]
where ${P\text{-}BMO}$ is the space of bounded mean oscillation with respect to parabolic balls (or cubes) and $D_t^{1/2} \psi(t,x')$ denotes the half-order time derivative. The half-order time derivative of $\psi$ can be defined by the Fourier transform or by
\[D_t^{1/2} \psi(t,x'):= \hat{c} \text{ p.v.}\int_{\re} \frac{\psi(s,x') -\psi(t,x') }{|s-t|^{3/2}} \, dt, \quad \forall t \in \re, \forall x' \in \re^{n-1},\]
where $\hat{c}$ is an appropriate constant.
\end{itemize}
\end{definition}

\begin{definition}[Coherency \cite{DS2}]\label{d3.11}
Suppose $E$ is a $d$-dimensional ADR set with dyadic cubes $\dd(E)$. Let $\sbf\subset \dd(E)$. We say that $\sbf$ is
``coherent" if the following conditions hold:
\begin{itemize}\itemsep=0.1cm

\item[$(a)$] $\sbf$ contains a unique maximal element $Q(\sbf)$ which contains all other elements of $\sbf$ as subsets.

\item[$(b)$] If $Q$  belongs to $\sbf$, and if $Q\subset \widetilde{Q}\subset Q(\sbf)$, then $\widetilde{Q}\in {\bf S}$.

\item[$(c)$] Given a cube $Q\in \sbf$, either all of its children belong to $\sbf$, or none of them do.

\end{itemize}
We say that $\sbf$ is ``semi-coherent'' if only conditions $(a)$ and $(b)$ hold.
\end{definition}

The following is the bilateral corona decomposition.
\begin{lemma}[{\cite[Theorem 3.3]{BHHLN2}}]\label{bilateralcorona.lem} Suppose that $E\subset \ree$ is P-UR.
Given any positive constant
$\eta \ll 1$ and  $K := \eta^{-1}$, there are constants $C_{\eta}= C_\eta(\eta, n, ADR, \text{P-UR})$ and $b_2 = b_2(n,ADR, \text{P-UR})$ and a disjoint decomposition
$\dd(E) = \G\cup\B$, satisfying the following properties.
\begin{enumerate}
\item  The ``Good"collection $\G$ is further subdivided into
disjoint `stopping time regimes', $\G = \cup_{\sbf^* \in \mathcal{S}} \sbf^*$ such that each such regime $\sbf^*$ is coherent.
\item The ``Bad" cubes, as well as the maximal cubes $Q({\sbf^*})$ satisfy a Carleson
packing condition:
$$\sum_{Q'\subset Q, \,Q'\in\B} \sigma(Q')
\,\,+\,\sum_{{\sbf^*}: Q({\sbf^*})\subset Q}\sigma\big(Q({\sbf^*})\big)\,\leq\, C_{\eta}\, \sigma(Q)\,,
\quad \forall Q\in \dd(E)\,.$$
\item For each ${\sbf^*}$, there is a regular Lip(1/2,1) graph $\Gamma_{{\sbf^*}}$, where the function defining the graph has Lip(1/2,1) constant
at most $\eta$ (that is, $b_1 \le \eta$) and whose half-order time derivative has P-BMO norm bounded by $b_2$, such that, for every $Q\in {\sbf^*}$,
\begin{equation}\label{eq2.2a}
\sup_{(t,X) \in KQ} \dist((t,X),\Gamma_{{\sbf^*}} )\,
+\,\sup_{(s,Y) \in B_Q^*\cap\Gamma_{{\sbf^*}}}\dist((s,Y),E) < \eta\,\diam(Q)\,,
\end{equation}
where $B_Q^*:= B(x_Q,K\diam(Q))$.
\end{enumerate}
\end{lemma}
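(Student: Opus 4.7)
The plan is to adapt David and Semmes' bilateral corona construction to the parabolic setting, using the equivalent formulation of P-UR via the $(2,2)$ geometric lemma with respect to $t$-independent planes. Denote by $\beta^p_2(Q)$ the parabolic $L^2$ $\beta$-number of $E$ over a fixed dilate of $Q$ with respect to the best-approximating $t$-independent plane; by hypothesis these numbers satisfy a Carleson packing condition on $\mathbb{D}(E)$. First, upgrade this one-sided $L^2$ information to a bilateral $L^\infty$ approximation: declare $Q$ to be \emph{bad} if the bilateral parabolic $\beta^p_\infty$-number (with respect to $t$-independent planes) over $K Q$ exceeds a small threshold $\eta_1 \ll \eta$. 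A standard ADR and Chebyshev argument bounds this bilateral $\beta^p_\infty$ by $\beta^p_2$, so the bad collection $\mathcal{B}$ inherits the Carleson packing bound. For each good $Q$, fix a $t$-independent plane $P_Q$ realising the bilateral approximation up to error $\eta_1\diam(Q)$.

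Next, organise the good cubes into coherent stopping-time regimes $\mathbf{S}^*$ by the usual greedy procedure: starting from a maximal unassigned good cube $Q(\mathbf{S}^*)$, add descendants to $\mathbf{S}^*$ as long as (i) they are good, (ii) the angle between $P_Q$ and $P_{Q(\mathbf{S}^*)}$ remains below $\eta$, and (iii) the Hausdorff distance between $P_Q \cap B_Q^*$ and $P_{Q(\mathbf{S}^*)} \cap B_Q^*$ stays below $\eta\diam(Q(\mathbf{S}^*))$. Stopping cubes from (ii) and (iii) pack by the standard telescoping estimate expressing plane differences along a chain as a Carleson sum of $\beta^p_2$-numbers, and the maximal cubes $Q(\mathbf{S}^*)$ inherit packing from this and from $\mathcal{B}$ as in the elliptic case.

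For each regime $\mathbf{S}^*$, construct $\Gamma_{\mathbf{S}^*}$ as a graph over $P := P_{Q(\mathbf{S}^*)}$. Because the angle between each $P_Q$ and $P$ is at most $\eta$, the orthogonal projection from $E \cap K Q$ to $P$ is bi-Lipschitz with constant near one, and the bilateral $\beta^p_\infty$-control identifies $E$ locally with a Lip(1/2,1) graph of small slope over $P$. Stitch these local representatives together into a single function $\psi_{\mathbf{S}^*}$ on the shadow of $\mathbf{S}^*$ via a smooth partition of unity subordinate to the dyadic hierarchy of $\mathbf{S}^*$, and extend to all of $P$ by a Whitney-type extension adapted to the parabolic metric, using the tilt of $P$ versus the approximating planes of the cubes just outside $\mathbf{S}^*$ as boundary data. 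The Lip(1/2,1) bound $b_1 \leq \eta$ comes directly from the tilt control and from the smallness of the slopes of the local pieces.

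The main obstacle is establishing the \emph{regularity} bound $\|D_t^{1/2}\psi_{\mathbf{S}^*}\|_{P\text{-}BMO} \leq b_2$ with $b_2$ depending only on $n$, ADR and P-UR, but crucially not on $\eta$. This goes beyond the pointwise Lip(1/2,1) bound; it encodes the quantitative time-regularity that distinguishes P-UR from the mere existence of parabolic Lipschitz graph approximants. The plan is to compute the mean oscillation of $D_t^{1/2}\psi_{\mathbf{S}^*}$ over a parabolic ball $B$ using the singular-integral definition, decompose it into scales indexed by subcubes of $\mathbf{S}^*$ meeting $B$, and at each scale recognise the integrand as an $L^2$ square function of the local plane tilts together with an error from regime changes and from the Whitney extension. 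Each of these three contributions is then dominated by a Carleson sum that is bounded using, respectively, the $\beta^p_2$ geometric lemma, the packing of the stopping cubes from step (ii)--(iii), and the packing of $\mathcal{B}$ and of the maximal cubes $Q(\mathbf{S}^*)$. It is in this step that the parabolic structure genuinely interacts with P-UR, and where one uses decisively that the $\beta^p_2$-numbers are taken against $t$-independent planes so that the half-order time oscillation of $\psi_{\mathbf{S}^*}$ is the only unknown to be controlled.
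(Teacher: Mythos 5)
The paper itself gives no proof of this lemma --- it imports it verbatim from \cite{BHHLN2} --- so there is no internal argument to compare against. Your plan correctly identifies the David--Semmes corona paradigm as the right framework, and correctly singles out the half-order time regularity bound as the genuinely parabolic difficulty. However, there is a substantive gap in your second step. The claim that the \emph{bilateral} $\beta_\infty$-number over $KQ$ is controlled by the one-sided $\beta_2$-number ``by a standard ADR and Chebyshev argument'' is false. ADR plus Chebyshev does yield the one-sided $\beta_\infty$ estimate (every point of $E\cap KQ$ is close to the approximating $t$-independent plane), but it says nothing about the reverse inclusion: the plane may pass through a large ``hole'' of $E$, leaving points of $P\cap B_Q^*$ far from $E$, and this is invisible to any one-sided $\beta$-number. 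In the elliptic theory the passage from the geometric lemma to the bilateral weak geometric lemma is one of David and Semmes' deep equivalences, not a Chebyshev reduction; in the parabolic setting one needs the big-pieces-of-regular-Lip(1/2,1)-graphs theorem of \cite{HLN-BP}, or an equivalent, together with a genuine packing argument. Without this, the decomposition $\dd(E) = \G \cup \B$ with $\B$ Carleson is not established.

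A secondary concern is that the final step --- bounding $\|D_t^{1/2}\psi_{\sbf^*}\|_{P\text{-}BMO}$ --- is the technical heart of \cite{BHHLN2}, and your sketch (a scale decomposition of the mean oscillation into plane-tilt square functions plus regime-change and extension errors) is correct in spirit but omits the mechanism that makes it close: one shows that the constructed graph $\Gamma_{\sbf^*}$ itself satisfies a parabolic $(2,2)$ geometric lemma with constants controlled by those of $E$ together with the packing of stopping cubes, and then invokes the equivalence, valid for Lip(1/2,1) graphs, between P-UR and the $P\text{-}BMO$ bound on $D_t^{1/2}\psi$. This is far from routine; it is precisely why \cite{BHHLN2} exists as a separate paper, and a plan that treats it as a sequence of ``standard'' estimates is not yet a proof.
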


\begin{remark}\label{subregimessame.rmk}
Notice that if $\sbf$ is any coherent subregime\footnote{This means $\sbf \subseteq \sbf^*$ and $\sbf$ satisfies the coherency conditions in Definition \ref{d3.11}.} of $\sbf^*$ then item (3) holds for every $Q \in \sbf$. Also, note that below we may insist that
$K$ is large, but this should be interpreted as taking $\eta$ small.
\end{remark}

\begin{definition}[Whitney cubes and Whitney regions]\label{whitney.def}
Given an ADR set $E \subset \ree$ we let $\W(E^c)$ be a the standard (parabolic) Whitney decomposition of $E^c$, that is, $\W(E^c) = \{I_i\}$ is a collection of closed parabolic dyadic cubes with disjoint interiors, $\cup_{\W(E^c)} I_i = E^c$ and for each $I \in \W(E^c)$
\[4 \diam(I) \le \dist(4I, E) \le \dist(I,E) \le 100 \diam(I).\]
(A similar construction can be found in Lemma \ref{regL121.lem} below). For $\eta \ll 1 \ll K$ and $Q \in \mathbb{D}(E)$, we define
\[\mathcal{W}_Q(\eta,K)=\{I \in \W(E^c): \eta^{1/4} \diam(Q) \le \dist(I,E) \le \dist(I, Q) \le K^{1/4}\diam(Q)\}\]
and
\[\mathcal{W}^*_Q(\eta,K)=\{I \in \W(E^c): \eta^{4} \diam(Q) \le \dist(I,E) \le \dist(I, Q) \le K\diam(Q)\}.\]
Comparing volumes, we see that $\#\W_Q \le C(n, \eta, K)$ (here and in the sequel we use the notation
$\#A$ to denote the cardinality of a finite set $A$).
For $\eta \ll 1 \ll K$ and $Q \in \mathbb{D}(E)$,
we set
\[U_Q(\eta, K) = \bigcup_{I \in \mathcal{W}_Q(\eta,K)} I\]
and
\[U^*_Q(\eta, K) = \bigcup_{I \in \mathcal{W}^*_Q(\eta,K)} I.\]
\end{definition}

\begin{remark}
 The reader may readily verify that the Whitney regions $U_Q$ and $U_Q^*$ have bounded overlaps, that is,
\[\sum_{Q \in \dd(E)} 1_{U_Q}(t,X) + \sum_{Q \in \dd(E)} 1_{U^*_Q}(t,X)  \lesssim 1, \quad \forall (t,X) \in \ree,\]
where the implicit constant depend on dimension, ADR, $\eta$ and $K$.
\end{remark}

\section{Domain approximation in stopping time regimes}\label{domainapprox.sect}

In this section we assume that $E$ has a bilateral corona decomposition and we fix $\sbf$,
a coherent subregime of a stopping time regime $\sbf^*$ in the bilateral corona  decomposition (by Remark \ref{subregimessame.rmk} the same estimates hold for $\sbf$). Our goal is to construct a family of graphs  that approximate the set $E$ well in
the  sense of Lemma \ref{bilateralcorona.lem} (3)  but have the additional property that they lie `above' (or on) the set $E$ at the scale and location of the maximal cube $Q_{\sbf}$. Other important properties of the construction will also be established including containment properties with respect to the Whitney regions defined above (see Definition \ref{whitney.def}). In the sequel will often insist on further smallness of $\eta$ depending on dimension and the ADR constant for $E$.
Compared to \cite{HMM1}, the constructions outlined in this section are the main novelties of this paper.

Let $Q_\sbf: = Q(\sbf)$ be the maximal cube in the coherent subregime under consideration. Recall that $\sbf \subseteq \sbf^*$ and that there exists a regular Lip(1/2,1) graph, $\Gamma_{\sbf^*}$, such that Lemma \ref{bilateralcorona.lem} (3) holds for $\sbf^*$ and hence also for $\sbf$. Without loss of generality we may assume that the $t$-independent plane over which
 $\Gamma: = \Gamma_{\sbf^*}$ is defined, is $\rn \times \{0\}$. Let $f: \rn \to \re$ be the
regular Lip(1/2,1) function that defines $\Gamma_{\sbf^*}$, that is,
\[\Gamma := \Gamma_{\sbf^*}= \{(t,x',f(t,x')): (t,x') \in \rn\}. \]
We define the $\ree$-valued function
\[F(t,x') = (t,x',f(t,x')).\]

Inspired by constructions in \cite{DS1},
we define the `stopping time distance'
$d:\ree \to \re$
by\footnote{Note that we take the stopping time distance in the {\it subregime}.}
\[d[(t,X)]= \inf_{Q \in \sbf}[\dist((t,X), Q) + \diam(Q)].\]
Given $\alpha \in [7/8,31/32]$ we introduce
\[g_\alpha(t,x'):= f(t,x') + \eta^\alpha d[F(t,x')]\]
and
\[G_\alpha(t,x'): = (t,x', g_\alpha(t,x')).\]
As $\alpha \in [7/8,31/32]$, below we will drop the subscript $\alpha$ and all constants will be
independent of $\alpha$.

We first prove that $g$ is Lip(1/2,1).
\begin{lemma}\label{dgisbasicallydf.lem}
If $\eta^{7/8} \le 1/2$, then $g$ is a Lip(1/2,1) function with constant less than $3\eta^{\alpha}$, and the function
\[G(t,x') := (t,x',g(t,x'))\]
satisfies
\[(1/2) d[F(t,x')] \le d[G(t,x')] \le 2 d[F(t,x')].\]
\end{lemma}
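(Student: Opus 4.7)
The plan is to reduce both claims to the single observation that the ``stopping time distance'' $d$ is $1$-Lipschitz in the parabolic metric, and to combine this with the Lip(1/2,1) regularity of the graph map $F$.

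First I would establish that $d:\ree\to[0,\infty)$ satisfies
\[
|d[(t,X)] - d[(s,Y)]| \le \dist((t,X),(s,Y)).
\]
This is routine: for each fixed $Q\in\sbf$, the map $(t,X)\mapsto \dist((t,X),Q)+\diam(Q)$ is $1$-Lipschitz by the (parabolic) triangle inequality, and an infimum of $1$-Lipschitz functions is $1$-Lipschitz. The only mildly subtle point is confirming that $\dist$ is a bona fide metric, i.e., that $|t_1-t_3|^{1/2}\le |t_1-t_2|^{1/2}+|t_2-t_3|^{1/2}$, which follows from subadditivity of the square root.

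Next I would show that the graph map $F(t,x')=(t,x',f(t,x'))$ is Lip(1/2,1) as a map from $(\rn,\,|\cdot|+|\cdot|^{1/2})$ into $(\ree,\dist)$ with constant at most $1+\eta$. This is a direct computation using $|f(t,x')-f(s,y')|\le\eta(|x'-y'|+|t-s|^{1/2})$ (from Lemma \ref{bilateralcorona.lem}(3), since $b_1\le\eta$), together with $|(x',f(t,x'))-(y',f(s,y'))|\le |x'-y'|+|f(t,x')-f(s,y')|$. Composing with the $1$-Lipschitz function $d$ then gives that $d\circ F$ is Lip(1/2,1) with constant at most $1+\eta\le 2$. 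Hence for $g=f+\eta^\alpha(d\circ F)$ I obtain
\[
|g(t,x')-g(s,y')| \le (\eta + \eta^\alpha(1+\eta))(|x'-y'|+|t-s|^{1/2}) \le 3\eta^\alpha(|x'-y'|+|t-s|^{1/2}),
\]
using $\eta\le\eta^\alpha$ (since $\alpha\le 1$ and $\eta\le 1$) and $\eta^{\alpha+1}\le\eta^\alpha$.

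For the comparison between $d[G]$ and $d[F]$, the key remark is that $G(t,x')$ and $F(t,x')$ share the same time coordinate and differ only in the last spatial coordinate, so
\[
\dist(G(t,x'),F(t,x'))=|g(t,x')-f(t,x')|=\eta^\alpha\, d[F(t,x')].
\]
Applying the $1$-Lipschitz bound for $d$ from Step 1 yields $|d[G(t,x')]-d[F(t,x')]|\le \eta^\alpha d[F(t,x')]$, and since $\eta^\alpha\le\eta^{7/8}\le 1/2$, this gives the stated two-sided bound $(1/2) d[F(t,x')]\le d[G(t,x')]\le (3/2)d[F(t,x')]\le 2d[F(t,x')]$.

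There is no serious obstacle; the argument is essentially a bookkeeping exercise once one has isolated the two building blocks (Lipschitzness of $d$ in $\dist$, and Lipschitzness of $F$). The only thing to be careful about is keeping track of which ``Lip'' constant is being estimated in which metric, and ensuring that the smallness assumption $\eta^{7/8}\le 1/2$ is used exactly where needed (to absorb the factor $1+\eta^\alpha$ into $2$).
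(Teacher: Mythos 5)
Your proof is correct and follows essentially the same route as the paper: both rest on the observation that $d$ is $1$-Lipschitz in the parabolic metric (as an infimum of $1$-Lipschitz functions) and that $G(t,x')$ and $F(t,x')$ differ only in the last coordinate, so $\dist(G,F)=|g-f|=\eta^\alpha d[F]$. Your presentation of the Lip(1/2,1) estimate for $g$ via the composition $d\circ F$ is a slightly tidier packaging of the same inline computation the paper performs, and your unified treatment (without the paper's case split on $d[F]=0$ versus $d[F]>0$) is fine since the Lipschitz bound covers both cases.
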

\begin{proof}
Note first that $d$ is Lip(1/2,1) (on $\ree$) with constant no more than $1$, that is, $|d[(t,X)] - d[(s,Y)]| \le \dist((t,X),(s,Y))$.
This follows from the fact that $d$ is the infimum of non-negative Lip(1/2,1) functions with constant $1$. Using this we see that
\begin{align*}
    |g(t,x') - g(s,y')| &\le |f(t,x') - f(s,y')|
    + \eta^\alpha |d[(t,x',f(t,x'))] - d[(s,y',f(s,y')]|
    \\ & \le \eta[|t-s|^{1/2} + |x' - y'|]
    \\ & \quad + \eta^{\alpha}[|t-s|^{1/2} + |x' - y'| + |f(t,x') - f(s,y')|]
    \\ & \le 3\eta^\alpha[|t-s|^{1/2} + |x' - y'|].
\end{align*}
To deduce the inequalities involving $d[G(t,x')]$ and $d[F(t,x')]$ we consider two cases. If $d[F(t,x')] = 0$ then $G(t,x') = F(t,x')$ so that $d[G(t,x')] = 0$. Otherwise, $d[F(t,x')] > 0$,
and using that $d$ is Lip(1/2,1) with constant $1$, we have
\begin{align*}|d[F(t,x')] - d[G(t,x')]| &\le \dist(F(t,x'), G(t,x')) = |f(t,x') - g(t,x')|
\\ & \le \eta^{\alpha} d[F(t,x')] \le (1/2) d[F(t,x')].
\end{align*}
From this we easily obtain
\[(1/2) d[F(t,x')] \le d[G(t,x')] \le 2 d[F(t,x')]. \]
\end{proof}

We will use the following elementary lemma several times.
\begin{lemma}\label{disttoflatgraph.lem} If $\Gamma'$ is the graph of a Lip(1/2,1) function $\varphi$ with Lip(1/2,1) norm less than $1/2$, then
\[(1/2)|x_n - \varphi(t,x')| \le \dist\big((t,X), \Gamma'\big) \le |x_n - \varphi(t,x')|,\]
 for all $(t,X) = (t,x',x_n)$.
\end{lemma}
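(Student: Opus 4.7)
The plan is to handle the two inequalities separately, the upper bound being essentially trivial and the lower bound following from the triangle inequality combined with the Lip(1/2,1) assumption on $\varphi$.

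For the upper bound, I would simply observe that the point $(t,x',\varphi(t,x'))$ lies on $\Gamma'$ and has the same time coordinate as $(t,X)$. Under the parabolic metric defined in \eqref{paradist}, this gives
\[
\dist\bigl((t,X), \Gamma'\bigr) \;\le\; \dist\bigl((t,X),(t,x',\varphi(t,x'))\bigr) \;=\; |x_n - \varphi(t,x')|,
\]
which is the desired upper estimate.

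For the lower bound, I would fix an arbitrary point $(s,y',\varphi(s,y')) \in \Gamma'$ and set
\[
R := \bigl|(x',x_n) - (y',\varphi(s,y'))\bigr|, \qquad \tau := |s-t|^{1/2},
\]
so that the parabolic distance from $(t,X)$ to this point equals $D = R+\tau$. Since $R \ge |x_n - \varphi(s,y')|$ and $R \ge |x'-y'|$, the triangle inequality together with the Lip(1/2,1) hypothesis on $\varphi$ yields
\[
|x_n - \varphi(t,x')| \;\le\; |x_n - \varphi(s,y')| + |\varphi(s,y') - \varphi(t,x')| \;\le\; R + \tfrac{1}{2}\bigl(|x'-y'|+\tau\bigr) \;\le\; \tfrac{3}{2}(R+\tau).
\]
Taking the infimum over $(s,y',\varphi(s,y')) \in \Gamma'$ would then give $|x_n-\varphi(t,x')| \le \tfrac{3}{2}\dist((t,X),\Gamma')$, which is stronger than the claimed lower bound with constant $1/2$.

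There is no real obstacle here; the only thing to keep track of is that the parabolic metric in \eqref{paradist} bundles the full spatial distance into a single Euclidean term, so one cannot naively split the inequality coordinate by coordinate. Once that is noted, both directions are one-line consequences of the Lipschitz hypothesis and the triangle inequality.
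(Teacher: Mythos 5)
Your proof is correct. The upper bound argument is identical to the paper's (which simply notes it is ``trivial''). For the lower bound you take a genuinely different and slightly more efficient route. The paper translates so that $(t,x',\varphi(t,x'))$ becomes the origin, assumes $x_n \geq 0$, and splits into two cases depending on whether the ``horizontal'' parabolic distance $|y'| + |s|^{1/2}$ from $(t,x')$ to $(s,y')$ exceeds $x_n$: when it does, the distance to the graph point is at least $x_n$ trivially; when it does not, the Lip(1/2,1) bound forces $|\varphi(s,y')| \leq x_n/2$, so the vertical gap alone contributes at least $x_n/2$. Your argument avoids the case split entirely by applying the triangle inequality directly to $x_n - \varphi(t,x')$, bounding the intermediate term $|\varphi(s,y')-\varphi(t,x')|$ via the Lip(1/2,1) hypothesis, and absorbing everything into $\tfrac{3}{2}(R+\tau)$. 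This yields the constant $\tfrac{2}{3}$ in place of $\tfrac{1}{2}$, which is a (mild) quantitative improvement, and the reasoning is arguably cleaner; the only thing the paper's approach buys is slightly more transparent geometric intuition about which component of the distance dominates in each regime. Both proofs are valid.
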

\begin{proof}
The inequality on the right hand side is trivial. To prove the inequality on the left hand side,  we can, after a translation, assume
that  $(t,x',\varphi(t,x')) = (0,0,0)$. Furthermore, we can without loss of generality assume that $x_n \ge 0$ (the case $x_n < 0$ is treated in the
same way). Then  $|x_n - \varphi(0,0)| = x_n$. If $(s,y') \in \rn$ satisfies $|y'| + |s|^{1/2} > x_n$, then
\[\dist((t,X), (s,y', \varphi(s,y')) \ge |y'| + |s|^{1/2} \ge x_n.\]
If $(s,y') \in \rn$ satisfies $|y'| + |s|^{1/2} \le x_n$, then $|\varphi(s,y')| \le (1/2) x_n$ and hence
\[\dist((t,X), (s,y',\varphi(s,y')) \ge |x_n - \varphi(s,y')| \ge (1 - 1/2) x_n = 1/2 x_n.\]
These estimates prove the lemma.
\end{proof}

We will need the following properties of the stopping time distance.

\begin{lemma}\label{stdistutil.lem} Let $A > 1$.  If  $(t,X) \in \ree$ satisfies $0 < 2d[(t,X)] \le A \diam(Q_\sbf)$, then there exists $Q^* \in \sbf$ such that
\begin{equation}\label{stdistutileq1.eq}
\dist((t,X), Q^*) \le 2d[(t,X)] \le A\diam(Q^*) \le C_{n,ADR} d[(t,X)].
\end{equation}
If $d[(t,X)] =0 $,  then there exists, for every $\epsilon \in (0, A\diam(Q_\sbf))$, $Q_\epsilon \in \sbf$ such that
\begin{equation}\label{stdistutileq2.eq}
\dist((t,X), Q_\epsilon) \le \epsilon < A\diam(Q_\epsilon) \le C_{n,ADR} \epsilon.
\end{equation}
\end{lemma}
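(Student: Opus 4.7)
My plan is to derive both claims from the same two ingredients: the definition of $d$ as an infimum, and the coherence property (b) in Definition \ref{d3.11} for the dyadic tree restricted to $\sbf$. In each case I select a near-minimizer $Q \in \sbf$ and then walk up the dyadic tree to the smallest ancestor whose diameter exceeds the target scale; the hypothesis comparing $d[(t,X)]$ (or $\epsilon$) to $\diam(Q_\sbf)$ is precisely what guarantees that this ancestor still lies inside $Q_\sbf$ and therefore belongs to $\sbf$.

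For \eqref{stdistutileq1.eq}, I first use the definition of $d[(t,X)]$ as an infimum to pick $Q \in \sbf$ with $\dist((t,X),Q) + \diam(Q) < 2\,d[(t,X)]$. Because $Q_\sbf$ is the unique maximal element of $\sbf$ by Definition \ref{d3.11}(a), we have $Q \subseteq Q_\sbf$. I then let $Q^*$ be the smallest ancestor of $Q$ with $\diam(Q^*) \ge 2\,d[(t,X)]/A$; the assumption $2\,d[(t,X)] \le A\,\diam(Q_\sbf)$ gives $\diam(Q_\sbf) \ge 2\,d[(t,X)]/A$, so such an ancestor exists within the chain $Q \subseteq \cdots \subseteq Q_\sbf$, and coherence (b) places it in $\sbf$. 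The first two inequalities are then immediate: $\dist((t,X),Q^*) \le \dist((t,X),Q) \le 2\,d[(t,X)]$ since $Q \subseteq Q^*$, and $A\,\diam(Q^*) \ge 2\,d[(t,X)]$ by the defining property of $Q^*$. For the upper bound $A\,\diam(Q^*) \le C_{n,\text{ADR}}\,d[(t,X)]$ I invoke minimality: if $Q^*$ is a strict ancestor of $Q$, then its unique child containing $Q$ has diameter strictly below $2\,d[(t,X)]/A$, and the dyadic property from Lemma \ref{cubes} relating parent and child diameters via dimensional constants yields $\diam(Q^*) \le C_{n,\text{ADR}} \cdot 2\,d[(t,X)]/A$; if instead $Q^* = Q$, then $\diam(Q^*) < 2\,d[(t,X)]$ already suffices after absorbing the factor $A$ into the constant.

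For \eqref{stdistutileq2.eq}, the condition $d[(t,X)] = 0$ means that cubes of $\sbf$ come arbitrarily close to $(t,X)$ with arbitrarily small diameters. Given $\epsilon \in (0, A\,\diam(Q_\sbf))$, I pick $Q \in \sbf$ with $\dist((t,X),Q) + \diam(Q)$ smaller than, say, $\epsilon/(2A)$, so in particular both $\dist((t,X),Q) < \epsilon$ and $\diam(Q) < \epsilon/A$. Taking $Q_\epsilon$ to be the smallest ancestor of $Q$ with $\diam(Q_\epsilon) > \epsilon/A$ --- which exists inside $Q_\sbf$ by the hypothesis $\epsilon < A\,\diam(Q_\sbf)$ and hence lies in $\sbf$ by coherence --- the three required inequalities $\dist((t,X),Q_\epsilon) \le \epsilon$, $A\,\diam(Q_\epsilon) > \epsilon$, and $A\,\diam(Q_\epsilon) \le C_{n,\text{ADR}}\,\epsilon$ follow exactly as before. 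The only genuine obstacle in the whole argument is making sure the walk-up procedure does not overshoot $Q_\sbf$; this is precisely what the hypotheses relating $d[(t,X)]$ (respectively $\epsilon$) to $A\,\diam(Q_\sbf)$ are designed to guarantee, and everything else reduces to Lemma \ref{cubes} together with coherence.
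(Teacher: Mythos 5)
Your argument follows the paper's own proof essentially step for step: pick a near-minimizer $Q\in\sbf$ from the definition of $d$ as an infimum, walk up the dyadic tree to the smallest ancestor $Q^*$ satisfying $A\,\diam(Q^*)\ge 2d[(t,X)]$ (coherence and the hypothesis $2d\le A\,\diam(Q_\sbf)$ keep this ancestor inside $\sbf$), and obtain the upper bound on $\diam(Q^*)$ from minimality together with the bounded parent-to-child diameter ratio for the dyadic cubes of Lemma~\ref{cubes}. One nice touch in your handling of the $d=0$ case: by starting from a cube $Q$ with $\dist((t,X),Q)+\diam(Q)<\epsilon/(2A)$, you force $\diam(Q)<\epsilon/A$, so $Q_\epsilon$ is automatically a \emph{strict} ancestor of $Q$ and the parent-to-child comparison applies unconditionally, yielding $A\,\diam(Q_\epsilon)\le C_{n,\text{ADR}}\,\epsilon$ with a genuinely $A$-independent constant. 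The paper simply says ``repeat the argument'' here and does not point out this simplification.

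There is, however, one point in your $d>0$ case that deserves attention, and it is worth flagging because your wording makes it visible. When $Q^*=Q$ you write that $\diam(Q^*)<2d[(t,X)]$ suffices ``after absorbing the factor $A$ into the constant.'' But the constant in the stated chain $A\,\diam(Q^*)\le C_{n,\text{ADR}}\,d[(t,X)]$ is asserted to depend only on $n$ and the ADR constant, not on $A$; absorbing $A$ produces a constant of order $A$, which is not the same thing. In fact, when $d[(t,X)]$ is achieved (up to a factor $2$) at the minimal cube of $\sbf$ containing $(t,X)$, one can have $\diam(Q^*)\approx d[(t,X)]$, so $A\,\diam(Q^*)\approx A\,d[(t,X)]$ and no $A$-uniform bound is possible. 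This is not a defect unique to your proof: the paper's own treatment of the case $Q^*=Q$ (``$\diam(Q^*)\le 2d[(t,X)]$ and we are done'') has the identical gap, and what the paper's proof actually establishes in both subcases is the weaker, $A$-free estimate $\diam(Q^*)\lesssim_{n,\text{ADR}}d[(t,X)]$. A check of the subsequent applications (Lemmas~\ref{dfcloseeta.lem} and~\ref{domainsinbigcubes.lem}) shows that this weaker form, together with the middle inequality $2d\le A\,\diam(Q^*)$, is all that is ever used. So your proof is correct modulo the same imprecision in the statement that the paper itself carries, but your phrase ``absorb $A$ into the constant'' should be replaced by the honest conclusion $\diam(Q^*)\le 2d[(t,X)]$, which is both what you actually prove and what is needed downstream.
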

\begin{proof}
We start with proving \eqref{stdistutileq1.eq}. By definition there exists $Q \in \sbf$ such that
\[\dist((t,X), Q) + \diam(Q) \le 2d[(t,X)].\]
Let $Q^* \in \sbf$ be the smallest cube satisfying $Q \subseteq Q^* \subseteq Q_\sbf$ such that
\begin{equation}\label{Acubed.eq}
    A\diam(Q^*) \ge 2d[(t,X)].
\end{equation}
Such a cube exists because $Q_\sbf$ is a `candidate'. Notice that since $Q^*$ contains $Q$, $\dist((t,X),Q^*) \le \dist((t,X),Q) \le 2d[(t,X)]$ which proves the first inequality in \eqref{stdistutileq1.eq}.
The second inequality in \eqref{stdistutileq1.eq} holds by the choice of $Q^*$. To see that the last inequality holds, we first note that if $Q^* = Q$, then $\diam(Q^*) = \diam(Q) \le 2d[(t,X)]$ and we are done. Otherwise, the child of $Q^*$ containing $Q$, $Q'$, fails to satisfy \eqref{Acubed.eq} and hence
\[A \diam(Q^*) \lesssim_{n,ADR} A \diam(Q')  \le 2d[(t,X)]\,.\]
Since $A > 1$, it holds that $\diam(Q^*) \lesssim_{n,ADR} d[(t,X)]$ (with
the implicit constant independent of $A$). This proves \eqref{stdistutileq1.eq}.

To verify \eqref{stdistutileq2.eq}, note that by definition there exists $Q \in \sbf$ such that
\[\dist((t,X),Q) + \diam(Q) \le \epsilon \le A \diam(Q_\sbf).\]
This allows us to repeat the argument above to produce $Q_\epsilon$.
\end{proof}

\begin{lemma}\label{dfcloseeta.lem}
If $(t,X) \in B( (t_{Q_\sbf}, X_{Q_\sbf}), (1/4)K\diam(Q_{\sbf})) \cap E$ with $(t,X) = (t,x',x_n)$ then
\[\dist((t,X) , \Gamma) \lesssim \eta d[(t,X)]\]
and
\[|x_n - f(t,x')| \lesssim \eta d[(t,X)].\]
Here the implicit constants depend only on dimension and $ADR$.
\end{lemma}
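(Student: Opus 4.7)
The plan is, for each $(t,X) \in E$ in the specified ball, to locate a stopping-time cube $Q^* \in \sbf$ such that (i) $(t,X) \in KQ^*$ in the sense of Lemma \ref{bilateralcorona.lem}(3), and (ii) $\diam(Q^*) \lesssim \eta\, d[(t,X)]$. The first bound then drops out of Lemma \ref{bilateralcorona.lem}(3) applied to $Q^*$, and the bound on $|x_n - f(t,x')|$ follows from Lemma \ref{disttoflatgraph.lem} since $f$ has Lip(1/2,1) constant at most $\eta \le 1/2$.

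First dispose of the case $d[(t,X)] = 0$: invoke the second assertion of Lemma \ref{stdistutil.lem} with $A = K-2$ to produce, for each small $\epsilon>0$, a cube $Q_\epsilon \in \sbf$ with $\dist((t,X), Q_\epsilon) \le \epsilon$ and $\diam(Q_\epsilon) \le C_{n,\mathrm{ADR}}\,\epsilon/(K-2)$. Then $(t,X) \in KQ_\epsilon$ and Lemma \ref{bilateralcorona.lem}(3) forces $\dist((t,X), \Gamma) \le \eta\,\diam(Q_\epsilon) \to 0$ as $\epsilon \to 0$; both inequalities of the lemma then hold trivially (with the second one by Lemma \ref{disttoflatgraph.lem}).

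Now assume $d[(t,X)] > 0$. Since $d$ is Lip(1/2,1) with constant $1$ and vanishes on $Q_\sbf$,
\[
d[(t,X)] \le \dist\bigl((t,X), (t_{Q_\sbf}, X_{Q_\sbf})\bigr) + \diam(Q_\sbf) \le (K/4 + 1)\diam(Q_\sbf).
\]
For $K$ large enough (i.e.\ $\eta$ small), $2 d[(t,X)] \le (K-2)\diam(Q_\sbf)$, so Lemma \ref{stdistutil.lem} applies with $A = K-2$ and yields $Q^* \in \sbf$ with
\[
\dist((t,X), Q^*) \le 2 d[(t,X)] \le (K-2)\diam(Q^*) \le C_{n,\mathrm{ADR}}\, d[(t,X)].
\]
The last inequality gives $\diam(Q^*) \le C_{n,\mathrm{ADR}}\,d[(t,X)]/(K-2) \lesssim \eta\, d[(t,X)]$ (using $K=1/\eta$), and the middle inequality places $(t,X)$ in $KQ^*$ since $\dist((t,X), Q^*) \le (K-2)\diam(Q^*) < (K-1)\diam(Q^*)$.

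Applying Lemma \ref{bilateralcorona.lem}(3) to $Q^*$ gives $\dist((t,X), \Gamma) \le \eta\,\diam(Q^*) \lesssim \eta\, d[(t,X)]$, with implicit constant depending only on $n$ and ADR. Since $\Gamma$ is the graph of $f$ and $f$ has Lip(1/2,1) constant at most $\eta \le 1/2$, Lemma \ref{disttoflatgraph.lem} yields $|x_n - f(t,x')| \le 2\,\dist((t,X), \Gamma) \lesssim \eta\, d[(t,X)]$, completing the proof. The only mild subtlety in this argument is choosing the parameter $A$ in Lemma \ref{stdistutil.lem} to lie strictly between $2d[(t,X)]/\diam(Q_\sbf)$ and $K-1$; this is possible because $(t,X)$ lies in the slightly smaller ball $B((t_{Q_\sbf},X_{Q_\sbf}), (K/4)\diam(Q_\sbf))$ (rather than, say, $B((t_{Q_\sbf},X_{Q_\sbf}), K\diam(Q_\sbf))$), which supplies exactly the needed room when $K$ is taken large enough.
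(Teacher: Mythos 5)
Your proof is correct and follows essentially the same route as the paper: dispose of the case $d[(t,X)]=0$ via the second assertion of Lemma \ref{stdistutil.lem}, and otherwise apply the first assertion with $A\approx K$ to locate $Q^*\in\sbf$ with $(t,X)\in KQ^*$ and $\diam(Q^*)\lesssim d[(t,X)]$, then invoke Lemma \ref{bilateralcorona.lem}(3) and Lemma \ref{disttoflatgraph.lem}. One small slip: you justify $d[(t,X)] \le \dist((t,X),(t_{Q_\sbf},X_{Q_\sbf}))+\diam(Q_\sbf)$ by saying $d$ ``vanishes on $Q_\sbf$,'' which is not true in general (on $Q_\sbf$ one only has $d\le\diam(Q_\sbf)$, and $d$ vanishes only where the regime persists to all small scales); however, the displayed inequality itself is correct and follows directly from the definition of $d$ by testing the infimum with $Q=Q_\sbf$, so the argument stands.
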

\begin{proof}
The second inequality follows from the
first and Lemma \ref{disttoflatgraph.lem}. If $d[(t,X)] = 0$,
then Lemma \ref{stdistutil.lem} gives that for $n \in \mathbb{N}$, we have $(t,X) \in KQ_{1/n}$ for some
$Q_{1/n} \in \sbf$ with $\diam(Q_{1/n}) \approx 1/n$. Then using Lemma \ref{bilateralcorona.lem}(3) we have $\dist((t,X),\Gamma) \lesssim 1/n$ for all $n$ and hence $(t,X) \in \Gamma$. This proves the lemma in the case $d[(t,X)]  = 0$.

Now assume $d[(t,X)] > 0$ and note that that $2d[(t,X)] < (K-1)\diam(Q_\sbf)$, if $K > 6$. Applying Lemma \ref{stdistutil.lem} there exists $Q^*$ such that
\[\dist((t,X), Q^*) \le (K-1) \diam(Q^*) \lesssim d[(t,X)].\]
Then Lemma \ref{bilateralcorona.lem}(3) gives
\[\dist((t,X), \Gamma) \le \eta\diam(Q^*) \lesssim \eta d[(t,X)],\]
as desired.
\end{proof}

Let
\[\Gamma^+ := \{(t,x',g(t,x')): (t,x') \in \rn \}\]
denote the graph of $g$. We first prove that we did not lose too much by modifying $f$ and that, in fact, $E$ lies below $\Gamma^+$ (near $Q_\sbf$).

\begin{lemma}\label{dgcloseeta.lem}
If $(t,X) \in B((t_{Q_\sbf}, X_{Q_\sbf}), (1/4)K \diam(Q_\sbf)) \cap E$ with $(t,X) = (t,x',x_n)$, then
\begin{itemize}
    \item[(a)]
$\frac{1}{8}\,\eta^\alpha d[(t,X)]\, \le\, \dist((t,X), \Gamma^+) \,\le  \, 3\eta^{\alpha} d[(t,X)],$ and
    \item[(b)] $x_n \le g(t,x') \,-\, \frac{1}{4} \,\eta^\alpha d[(t,X)].$
\end{itemize}
\end{lemma}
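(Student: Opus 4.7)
The strategy is to first compare $d[F(t,x')]$ with $d[(t,X)]$, then derive (b) directly from the definition of $g$, and finally deduce (a) from (b) together with Lemma \ref{disttoflatgraph.lem}.

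First I would observe that, since $d$ is Lip(1/2,1) with constant $1$ (noted in the proof of Lemma \ref{dgisbasicallydf.lem}), and since $F(t,x')$ and $(t,X) = (t,x',x_n)$ differ only in the last coordinate,
\[
\bigl|d[F(t,x')] - d[(t,X)]\bigr| \le \dist(F(t,x'),(t,X)) = |x_n - f(t,x')| \le C\eta\, d[(t,X)],
\]
where the last inequality is Lemma \ref{dfcloseeta.lem}. Hence for $\eta$ small (depending on $n$ and ADR),
\[
(1 - C\eta)\, d[(t,X)] \,\le\, d[F(t,x')] \,\le\, (1+C\eta)\, d[(t,X)].
\]

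Next, for (b), I would write
\[
g(t,x') - x_n \,=\, \bigl(f(t,x') - x_n\bigr) + \eta^\alpha\, d[F(t,x')]
\,\ge\, -C\eta\, d[(t,X)] + \eta^\alpha (1-C\eta)\, d[(t,X)].
\]
The key observation is that since $\alpha \in [7/8, 31/32]$, the exponent $1-\alpha \ge 1/32 > 0$, so $\eta = \eta^\alpha\cdot\eta^{1-\alpha}$ is much smaller than $\eta^\alpha$ when $\eta$ is small. Factoring out $\eta^\alpha$ gives
\[
g(t,x') - x_n \,\ge\, \eta^\alpha\, d[(t,X)]\bigl(1 - C\eta^{1-\alpha} - C\eta\bigr) \,\ge\, \tfrac14\,\eta^\alpha\, d[(t,X)],
\]
provided $\eta$ is chosen small enough (depending only on $n$ and ADR) so that the parenthetical factor exceeds $1/4$. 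This is exactly (b).

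Finally, for (a), since $g$ is Lip(1/2,1) with constant at most $3\eta^\alpha < 1/2$ (Lemma \ref{dgisbasicallydf.lem}, for $\eta$ small), Lemma \ref{disttoflatgraph.lem} applies to $\Gamma^+$ and yields
\[
\tfrac12\,|x_n - g(t,x')| \,\le\, \dist((t,X),\Gamma^+) \,\le\, |x_n - g(t,x')|.
\]
By (b), $|x_n - g(t,x')| = g(t,x') - x_n \ge \tfrac14\eta^\alpha d[(t,X)]$, producing the lower bound $(1/8)\eta^\alpha d[(t,X)]$. For the upper bound, using $|x_n - f(t,x')| \le C\eta\, d[(t,X)]$ and the bound on $d[F(t,x')]$ above,
\[
|x_n - g(t,x')| \,\le\, |x_n - f(t,x')| + \eta^\alpha\, d[F(t,x')] \,\le\, C\eta\, d[(t,X)] + 2\eta^\alpha\, d[(t,X)] \,\le\, 3\eta^\alpha\, d[(t,X)],
\]
again by taking $\eta$ small so that $C\eta \le \eta^\alpha$. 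The only genuinely delicate point is tracking the exponents of $\eta$ to ensure that the $\eta^\alpha$ term dominates the $\eta$-terms; this relies crucially on $\alpha < 1$ so that $\eta^{1-\alpha}$ is a genuine small parameter.
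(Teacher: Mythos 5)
Your proof is correct and follows essentially the same route as the paper's: compare $d[F(t,x')]$ to $d[(t,X)]$ via the Lip(1/2,1) property of $d$ and Lemma \ref{dfcloseeta.lem}, deduce (b) directly from the definition of $g$, and then obtain (a) from (b) together with Lemma \ref{disttoflatgraph.lem}. The only cosmetic differences are that you keep the sharper $(1\pm C\eta)$ comparison where the paper rounds to $1/2$ and $2$, and the paper treats the degenerate case $d[(t,X)]=0$ separately, which your inequalities already absorb.
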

\begin{proof}
If $d[(t,X)] = 0$, then $d[F(t,x')] = 0$  by Lemma \ref{dfcloseeta.lem}, and
\[(t,X) = (t,x',f(t,x')) = (t,x',{g(t,x')}).\]
This implies  (a) and (b).

Assume $d[(t,X)] > 0$. Lemma \ref{dfcloseeta.lem} yields the estimate
\begin{equation}\label{eq3.9}
|x_n - f(t,x')| \le C\eta d[(t,X)].
\end{equation}
If $C\eta < 1/2$,
then following the lines of
the proof of Lemma \ref{dgisbasicallydf.lem}, we have that
\begin{equation}\label{dfdz.eq}
(1/2) d[(t,X)] \le d[F(t,x')] \le 2d[(t,X)].
\end{equation}
Thus, by definition of $g$,
\begin{align*}
    g(t,x') - x_n &= \eta^{\alpha}d[F(t,x')] + (f(t,x') - x_n)
    \\& \ge \frac{\eta^{\alpha}}{2} d[(t,X)] - C\eta d[(t,X)] \ge \frac{\eta^\alpha}{4}d[(t,X)],
\end{align*}
provided $C \eta \le \eta^{\alpha}/4$.
This proves (b), and when combined with Lemma \ref{disttoflatgraph.lem}, it
gives the lower bound in (a). To verify the upper bound in (a), we use \eqref{eq3.9} and
\eqref{dfdz.eq} to write
\begin{multline*}
    |g(t,x') - x_n| \,\le\,  \eta^\alpha d[F(t,x')] + |f(t,x') - x_n| \\[4pt]
    \le \, 2\eta^\alpha d[(t,X)] \,+ \,C\eta d[(t,X)] \le 3\eta^\alpha d[(t,X)].
\end{multline*}
\end{proof}

We remind the reader that we have previously
defined certain Whitney regions (see Definition \ref{whitney.def}). We now investigate how these Whitney regions interact with the graphs we are constructing. First we need to see how they interact with the original graph $\Gamma$.
As in the elliptic setting \cite{HMM1}, we have the following.

\begin{lemma}\label{WhitneyaboveGamma.lem}
If $Q \in \sbf$ and $I \in \W_Q$ then $I$ is either above or below $\Gamma$ (it does not meet $\Gamma$). Moreover, the we have the estimate
\[\dist(I, \Gamma) \ge \eta^{1/2} \diam(Q).\]
\end{lemma}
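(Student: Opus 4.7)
The plan is to argue by contradiction, playing the bilateral approximation of Lemma \ref{bilateralcorona.lem}(3) against the fact that $I$ is a Whitney cube that is uniformly separated from $E$.

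Suppose $\dist(I, \Gamma) < \eta^{1/2}\diam(Q)$, and pick $p \in I$ and $p' \in \Gamma$ realizing this distance (up to an arbitrarily small error). First I would check that $p' \in B_Q^* = B(x_Q, K\diam(Q))$. By the defining inequalities of $\W_Q$ we have $\dist(I, Q) \le K^{1/4}\diam(Q)$ and, from the standard Whitney property $4\diam(I) \le \dist(4I, E)$, also $\diam(I) \le \dist(I,E)/4 \le K^{1/4}\diam(Q)/4$. Combining these with $\dist(p, Q) \le \diam(I) + \dist(I, Q)$ and $\dist(y, x_Q) \lesssim \diam(Q)$ for $y \in Q$, then adding $\dist(p, p') < \eta^{1/2}\diam(Q)$, the triangle inequality yields $\dist(p', x_Q) \lesssim K^{1/4}\diam(Q)$, which is strictly less than $K\diam(Q)$ for $K = \eta^{-1}$ sufficiently large.

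With $p' \in B_Q^*\cap\Gamma$ confirmed, Lemma \ref{bilateralcorona.lem}(3) (applied to $Q \in \sbf \subseteq \sbf^*$) produces a point $q \in E$ with $\dist(p',q) < \eta\diam(Q)$. A triangle inequality then gives
\[\dist(p, E) \le \dist(p,p') + \dist(p', q) < \eta^{1/2}\diam(Q) + \eta\diam(Q) \le 2\eta^{1/2}\diam(Q).\]
On the other hand, the definition of $\W_Q$ forces $\dist(p, E) \ge \dist(I,E) \ge \eta^{1/4}\diam(Q)$. Combining these two estimates gives $\eta^{1/4} \le 2\eta^{1/2}$, which fails as soon as $\eta < 1/16$. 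This contradiction proves the bound $\dist(I, \Gamma) \ge \eta^{1/2}\diam(Q)$.

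The \emph{above or below} dichotomy is then immediate: $\Gamma$ is the graph of a Lip(1/2,1) function over the $t$-independent plane $\rn\times\{0\}$, so $\ree\setminus\Gamma$ splits into exactly two connected components according to the sign of $x_n - f(t,x')$; since $I$ is connected and disjoint from $\Gamma$ (by the bound just proved), it must lie in exactly one of them. The only mild piece of bookkeeping is verifying that $p'$ actually lands in $B_Q^*$ despite $I$ possibly being at distance $\sim K^{1/4}\diam(Q)$ from $Q$, which is precisely the reason for the fourth-root scalings $\eta^{1/4}$ and $K^{1/4}$ in the definition of $\W_Q$: they provide enough slack against the bilateral error $\eta$ and enough room inside the ball $B_Q^*$ to make the contradiction close cleanly.
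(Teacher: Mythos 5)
Your argument is correct and follows essentially the same route as the paper: assume $\dist(I,\Gamma) < \eta^{1/2}\diam(Q)$, verify that the nearby point of $\Gamma$ lands in $B_Q^*$, invoke Lemma \ref{bilateralcorona.lem}(3) to pull it within $\eta\diam(Q)$ of $E$, and contradict the Whitney lower bound $\dist(I,E)\ge\eta^{1/4}\diam(Q)$; the above/below dichotomy then follows by connectedness since $I$ is disjoint from the graph. The only cosmetic difference is that you spell out the connectedness argument for the dichotomy, which the paper leaves implicit.
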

\begin{proof}
The first statement, about the cubes being above or below the graph, follows from the estimate. Suppose for the sake of contradiction that there exists $I \in \W_Q$, $Q \in \sbf$ such that $\dist(I,\Gamma) < \eta^{1/2}\diam(Q)$ and let $(s,Y) \in \Gamma$ be such that $\dist((s,Y),I) \le \eta^{1/2}\diam(Q)$. By construction
$\dist((t,Z),(t_Q,X_Q)) \lesssim K^{1/4} \diam(Q)$ for all $(t,Z) \in I$ and hence
\[
\dist((s,Y), (t_Q, X_Q)) \le \eta^{1/2}\diam(Q) + CK^{1/4}\diam(Q) \lesssim K^{1/4}\diam(Q)\,.
\]
Then by Lemma \ref{bilateralcorona.lem}(3),
$\dist((s,Y),E) \le \eta\diam(Q)$. Choosing $(t_0,Z_0) \in I$
such that $\dist((t_0,Z_0),(s,Y)) =\dist((s,Y),I) \le \eta^{1/2} \diam(Q)$, we have that
\begin{multline*}
\dist(I,E) \,\le\, \dist((t_0,Z_0),(s,Y)) \,+\, \dist((s,Y),E)  \\[4pt]
 \le\, \eta^{1/2}\diam(Q) \,+\, \eta \diam(Q) \le 2\eta^{1/2} \,< \,\eta^{1/4},
\end{multline*}
provided $\eta^{1/4}< 1/2$. This violates that $I \in \W_Q$.
\end{proof}

In light of Lemma \ref{WhitneyaboveGamma.lem}, for $Q \in \sbf$ we have that $\W_Q = \W_Q^+ \cup \W_Q^-$ where $\W_Q^+$ is the collection of Whitney cubes above $\Gamma$ and $\W_Q^-$ is the collection of Whitney cubes below $\Gamma$. We then define
\[U_Q^\pm := \bigcup_{I \in \W_Q^\pm} I.\]

The following lemma says that the $U_Q^+$ still lies above $\Gamma^+$ and, when $(t,X) \in U_Q^+$ the distance from $(t,X)$ to $\Gamma^+$ is roughly the distance to $E$.

\begin{lemma}\label{whitneygammaplus.lem}
Let $Q \in \sbf$. If $\eta$ is sufficiently small, then $U_Q^+$ lies above $\Gamma^+$ and
\begin{equation} \label{eq3.13}
x_n - g(t,x') \,\ge\,
(1/2)\dist\big((t,X),\Gamma\big), \quad \forall (t,X) = (t,x',x_n) \in U_Q^+.
\end{equation}
Moreover,
\begin{equation} \label{eq3.14}
\dist\big((t,X),\Gamma^+\big) \approx \dist \big((t,X),E\big) \quad (t,X) \in U_Q^+,
\end{equation}
where the implicit constants depend on dimension, ADR, $\eta$ and $K$.
\end{lemma}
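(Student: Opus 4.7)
The plan is to reduce everything to three ingredients: the Whitney separation $\dist((t,X),\Gamma) \geq \eta^{1/2}\diam(Q)$ from Lemma \ref{WhitneyaboveGamma.lem}, the Whitney-cube size bound $\dist(I,Q) \leq K^{1/4}\diam(Q)$ from Definition \ref{whitney.def} (which together with $K = \eta^{-1}$ will also control $d[(t,X)]$), and the fact that by construction the vertical perturbation $g-f = \eta^\alpha d[F(\cdot)]$ is of lower order than this separation once $\alpha$ is large enough.

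Fix $(t,X) = (t,x',x_n) \in I \subset U_Q^+$ with $I \in \W_Q^+$. Two routine bounds come first: from the Whitney condition, $d[(t,X)] \leq \dist((t,X),Q) + \diam(Q) \lesssim K^{1/4}\diam(Q)$, and from Lemma \ref{disttoflatgraph.lem} applied to $f$ (whose Lip(1/2,1) constant is $\leq \eta < 1/2$),
\[\dist((t,X),\Gamma) \leq x_n - f(t,x') \leq 2\dist((t,X),\Gamma).\]
Using that a nearest point of $E$ to $(t,X)$ lies in $KQ$ and hence within $\eta\diam(Q)$ of $\Gamma$ by Lemma \ref{bilateralcorona.lem}(3), one gets $|x_n - f(t,x')| \lesssim K^{1/4}\diam(Q)$; since $d$ is Lip(1/2,1) with constant $1$, it follows that $d[F(t,x')] \leq d[(t,X)] + |x_n - f(t,x')| \lesssim K^{1/4}\diam(Q)$. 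Combining these bounds with Lemma \ref{WhitneyaboveGamma.lem} and $K = \eta^{-1}$,
\[\eta^\alpha d[F(t,x')] \,\lesssim\, \eta^{\alpha - 1/4}\diam(Q) \,\leq\, \eta^{\alpha - 3/4}\dist((t,X),\Gamma),\]
and since $\alpha \geq 7/8$, the exponent $\alpha - 3/4 \geq 1/8 > 0$, so this is at most $\tfrac{1}{2}\dist((t,X),\Gamma)$ once $\eta$ is small. Substituting into $x_n - g(t,x') = (x_n - f(t,x')) - \eta^\alpha d[F(t,x')]$ and using $x_n - f(t,x') \geq \dist((t,X),\Gamma)$ proves \eqref{eq3.13} and, in particular, that $U_Q^+$ lies above $\Gamma^+$.

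For \eqref{eq3.14}, I would invoke Lemma \ref{disttoflatgraph.lem} for $g$ (whose Lip(1/2,1) constant is $\leq 3\eta^\alpha < 1/2$ by Lemma \ref{dgisbasicallydf.lem}), so that $\dist((t,X),\Gamma^+) \approx x_n - g(t,x')$. The upper bound $\dist((t,X),\Gamma^+) \lesssim \dist((t,X),E)$ then follows from $x_n - g(t,x') \leq x_n - f(t,x') \lesssim \dist((t,X),\Gamma) \lesssim \dist((t,X),E)$, where the last inequality uses the triangle inequality together with Lemma \ref{bilateralcorona.lem}(3) and the Whitney lower bound $\dist((t,X),E) \gtrsim \eta^{1/4}\diam(Q)$ to absorb the $\eta\diam(Q)$ error. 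The opposite inequality $\dist((t,X),E) \lesssim \dist((t,X),\Gamma^+)$ is purely quantitative: the Whitney upper bound $\dist((t,X),E) \lesssim K^{1/4}\diam(Q)$ combined with $\dist((t,X),\Gamma^+) \gtrsim \eta^{1/2}\diam(Q)$ (from \eqref{eq3.13} and Lemma \ref{WhitneyaboveGamma.lem}) yields a constant that is a power of $\eta^{-1}$, matching the $\eta$- and $K$-dependence allowed in the statement. No single step is geometrically subtle; the only real obstacle is bookkeeping the exponents of $\eta$ so that $\alpha \geq 7/8$ guarantees the perturbation beats the $\eta^{1/2}\diam(Q)$ separation.
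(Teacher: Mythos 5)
Your proof is correct and follows essentially the same strategy as the paper: exploit the Whitney separation from Lemma~\ref{WhitneyaboveGamma.lem}, bound $d[F(t,x')]\lesssim K^{1/4}\diam(Q)$, and show the perturbation $\eta^\alpha d[F(\cdot)]$ is subordinate to $\dist((t,X),\Gamma)$ once $\alpha\ge 7/8$ and $\eta$ is small, then finish with Lemma~\ref{disttoflatgraph.lem}. The only cosmetic difference is that you route the bound on $d[F(t,x')]$ through the nearest point of $E$ and the Lipschitz property of $d$, whereas the paper reaches the same estimate by first bounding $\dist(F(t,x'),Q)$; the exponent bookkeeping is identical in substance.
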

\begin{proof}
Recall that $\eta = K^{-1}$. Let $(t,X) = (t,x',x_n) \in I$ for some $I \in \W_Q^+$.  As $\dist((t,X),(t_Q,X_Q)) \lesssim K^{1/4} \diam(Q)$ and $Q \in \sbf$ we have
\begin{equation} \label{eq3.15}
\dist((t,X), \Gamma) \lesssim (K^{1/4} + \eta)\diam(Q) \lesssim K^{1/4}\diam(Q).
\end{equation}
Using Lemma \ref{disttoflatgraph.lem}, $$|(t,X) - F(t,x')| \lesssim K^{1/4} \diam(Q),$$
 and therefore $\dist(F(t,x'), Q) \lesssim K^{1/4}\diam(Q)$. It follows that $d[F(t,x')] \lesssim K^{1/4} \diam(Q)$, and using Lemma \ref{WhitneyaboveGamma.lem}
\begin{multline*}
    x_n - f(t,x') \ge \dist((t,X),\Gamma) \,\ge\, \eta^{1/2}\diam(Q) \\[4pt]
    \gtrsim \,\eta^{1/2}K^{-1/4} d[F(t,x')]\, \approx \,\eta^{3/4} d[F(t,x')].
\end{multline*}
By definition of $g$ and that $\alpha \ge 7/8$, Lemma \ref{disttoflatgraph.lem} implies that
\begin{multline}\label{eq3.16}
x_n - g(t,x') = x_n - f(t,x') - \eta^\alpha d[F(t,x')] \ge (1/2)(x_n - f(t,x'))
\\[4pt]
\ge \, (1/2) \dist((t,X),\Gamma) \,\ge\, (1/2) \eta^{1/2}\diam(Q),
\end{multline}
where the next-to-last inequality yields \eqref{eq3.13}, and where we
have used Lemma \ref{WhitneyaboveGamma.lem} in the last step.
In particular, $U_Q^+$ lies above $\Gamma^+$.  Using
\eqref{eq3.15} and the last inequality in \eqref{eq3.16},
and then the properties of the Whitney cubes in $\W_Q$, we have
\[  \dist((t,X), \Gamma) \approx_{\eta,K} \diam(Q)  \approx_{\eta,K} \dist((t,X),E) \,.\]
Combining \eqref{eq3.16} and the last displayed estimate,
and using Lemma \ref{disttoflatgraph.lem} we obtain
\[\dist((t,X),\Gamma^+) \ge (1/2)(x_n - g(t,x')) \ge (1/4) \dist((t,X),\Gamma) \approx_{\eta,K} \dist((t,X),E) \]
and
\[\dist((t,X),\Gamma^+) \le x_n- g(t,x') \le x_n - f(t,x') \le 2 \dist((t,X),\Gamma) \approx \dist((t,X),E).\]
This proves the lemma.
\end{proof}

We  also require that close to $Q_\sbf$, the region above $\Gamma^+$ shall be
contained in a collection of Whitney regions associated to $Q \in \sbf$. This can be done using the Whitney regions $U_Q^*$.

\begin{lemma}\label{domainsinbigcubes.lem}
Suppose $(t,X) = (t,x',x_n)$ satisfies $x_n > g(t,x')$ and
\begin{equation*}
(t,X) \in B\big((t_{Q_\sbf},X_{Q_\sbf}), (1/32)K\diam(Q_{\sbf})\big)\,.
\end{equation*}
Then
\begin{equation}\label{domainsinbigcubeseq1.eq}
\dist\big((t,X),E\big) \ge \dist\big((t,X), \Gamma^+\big)
\end{equation}
and
there exists $Q^* \in \sbf$ such that $(t,X) \in U^*_{Q^*}$.
\end{lemma}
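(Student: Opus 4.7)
The plan is to treat the two claims separately. For \eqref{domainsinbigcubeseq1.eq}, I will invoke Lemma \ref{dgcloseeta.lem}(b) to observe that every point of $E$ inside $B((t_{Q_\sbf}, X_{Q_\sbf}), (1/4)K\diam(Q_\sbf))$ lies weakly below $\Gamma^+$. Since $(t_{Q_\sbf}, X_{Q_\sbf}) \in E$, the hypothesis forces $\delta := \dist((t,X), E) \le (1/32)K\diam(Q_\sbf)$, and any closest point $(s^*, Y^*) \in E$ (attained by closedness of $E$) therefore sits inside the larger ball, giving $y_n^* \le g(s^*, y^{*\prime})$. Meanwhile $x_n > g(t,x')$, so the continuous function $\lambda \mapsto p_n(\lambda) - g(p'(\lambda))$ on the segment $p(\lambda) = (1-\lambda)(t,X) + \lambda(s^*, Y^*)$ changes sign, producing a crossing $p(\lambda_0) \in \Gamma^+$. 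Since $\dist((t,X), p(\lambda)) = \lambda|X - Y^*| + \sqrt{\lambda}|t-s^*|^{1/2}$ is monotone increasing in $\lambda \in [0,1]$, I conclude $\dist((t,X), \Gamma^+) \le \dist((t,X), p(\lambda_0)) \le \delta$.

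For the Whitney inclusion, first note that $(t,X) \notin E$ (otherwise $x_n \le g(t,x')$ by Lemma \ref{dgcloseeta.lem}(b)), so $\delta > 0$ and $(t,X)$ lies in a Whitney cube $I$ with $\diam(I) \approx \dist(I, E) \approx \delta$. I expect the main obstacle to be producing a cube $Q^* \in \sbf$ of size comparable to $\delta$ (up to powers of $K$), and I plan to achieve this through the two-sided estimate $\delta \le d[(t,X)] \lesssim K\delta$. The lower bound is immediate since each $Q \in \sbf$ lies in $E$. For the upper bound, parameterizing the segment from the first part starting at $(s^*, Y^*)$ gives $\dist((s^*, Y^*), \Gamma^+) \le \delta$; Lemma \ref{dgcloseeta.lem}(a) then yields $d[(s^*, Y^*)] \le 8\eta^{-\alpha}\delta \le 8K\delta$, and the $1$-Lipschitz property of $d$ (inherited from being an infimum of non-negative $1$-Lipschitz functions) delivers $d[(t,X)] \le d[(s^*, Y^*)] + \delta \le 9K\delta$.

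Combining this with the elementary bound $d[(t,X)] \le ((1/32)K+1)\diam(Q_\sbf) \le K\diam(Q_\sbf)/16$ (valid for $K \ge 32$), I will invoke Lemma \ref{stdistutil.lem} with $A = K$ to produce $Q^* \in \sbf$ satisfying $\dist((t,X), Q^*) \le 2d[(t,X)] \le K\diam(Q^*) \le C_{n,ADR} d[(t,X)]$. The first inequality yields $\dist(I, Q^*) \le K\diam(Q^*)$ directly, while the last, combined with $d[(t,X)] \lesssim K\delta$, gives $\diam(Q^*) \lesssim_{C_{n,ADR}} \delta$, so $\eta^4 \diam(Q^*) \le \dist(I, E)$ once $\eta$ is chosen small enough relative to $C_{n,ADR}$. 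This verifies $I \in \mathcal{W}^*_{Q^*}(\eta, K)$ and hence $(t,X) \in U^*_{Q^*}$, as claimed.
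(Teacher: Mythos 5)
Your proof is correct, and while the first half follows the paper's argument closely, the second half takes a genuinely different and arguably cleaner route.

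For \eqref{domainsinbigcubeseq1.eq}, you and the paper do essentially the same thing: the closest point $(s^*,Y^*)$ lands in the $(K/4)$-ball, Lemma \ref{dgcloseeta.lem}(b) places it weakly below $\Gamma^+$, and the segment from $(t,X)$ to $(s^*,Y^*)$ must cross $\Gamma^+$. (You spell out the monotonicity of $\lambda\mapsto\dist((t,X),p(\lambda))$, which the paper leaves implicit.)

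For the Whitney inclusion, the paper splits into two cases according to whether $x_n - g(t,x')$ is large or small compared with $\eta^3 d[(t,X)]$, and in the harder ``close to the graph'' case it proves an auxiliary Claim (bounding $\dist(G(t,x'),E)$ below by $\eta^2 d[G(t,x')]$) via a contradiction argument using the Lip$(1/2,1)$ bound on $g$ and Lemma \ref{dgcloseeta.lem}(a). You avoid the case split entirely: noting that the same crossing point from the first part gives $\dist((s^*,Y^*),\Gamma^+)\le\delta$, you apply the \emph{lower} bound of Lemma \ref{dgcloseeta.lem}(a) \emph{at the boundary point} $(s^*,Y^*)$ rather than at $G(t,x')$, and then transport the resulting bound $d[(s^*,Y^*)]\le 8\eta^{-\alpha}\delta$ to $(t,X)$ using the $1$-Lipschitz property of $d$. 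This produces the two-sided estimate $\delta\le d[(t,X)]\le 9K\delta$ in one stroke, after which Lemma \ref{stdistutil.lem} with $A=K$ and the Whitney geometry close the argument, provided $\eta$ is small relative to $C_{n,ADR}$ (which, as you note, is a dimension-and-ADR constraint and therefore allowable). What your approach buys is a unified argument that dispenses with the paper's Claim \ref{gorderdg.eq} and its contradiction; what it relies on is the slightly clever transfer of the stopping-time-distance bound from the nearest boundary point to $(t,X)$ itself, a step the paper does not use. Both routes yield the needed comparability $\diam(Q^*)\approx_{n,ADR,\eta,K}\dist(I,E)$ with the correct dependence on parameters.
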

\begin{proof}
Let $(t,X)$ be as above. By Lemma \ref{dgcloseeta.lem}, we see that $d[(t,X)] \neq 0$.
To prove \eqref{domainsinbigcubeseq1.eq}, we note that if $(s,Y)$ is the
closest point to $(t,X)$ in $E$, then
\[
\dist\big((t,X),(s,Y)\big) \leq \dist((t,X),(t_{Q_\sbf},X_{Q_\sbf})) < (K/32) \diam(Q_{\sbf})\,.
\]
Thus $\dist((s,Y),(t_{Q_\sbf},X_{Q_\sbf})) < (K/16)\diam(Q_{\sbf})$, so in particular,
\[
(s,Y) \in B\big((t_{Q_\sbf},X_{Q_\sbf}), (K/4) \diam(Q_{\sbf})\big)\cap E\,.
\]
By Lemma \ref{dgcloseeta.lem}(b), $(s,Y)$ lies below $\Gamma^+$ and hence the line segment between $(s,Y)$ and $(t,X)$ meets $\Gamma^+$. This proves \eqref{domainsinbigcubeseq1.eq}.

To prove the existence of  $Q^* \in \sbf$ such that $(t,X) \in U^*_{Q^*}$ we break the proof into cases.
\\
{\bf Case 1}: $x_n - g(t,x') \ge \eta^3d[(t,X)]$.
In this case, by \eqref{domainsinbigcubeseq1.eq} and Lemma \ref{disttoflatgraph.lem},
\[
\dist\big((t,X),E\big)\, \ge \,  \dist\big((t,X),\Gamma^+\big)\,\ge\, (1/2)\,\eta^3 d[(t,X)]\,.
\]
Since $d[(t,X)] \le (K/32 + 1)\diam(Q_{\sbf}) < ((K-1)/2)\diam(Q_\sbf)$,
we may use Lemma \ref{stdistutil.lem} to produce $Q^*$ with
\[\dist\big((t,X),Q^*\big) \le (K-1)\diam(Q^*) \lesssim_{n,ADR} d[(t,X)].\]
Thus,
\[(t,X) \in B\big((t_{Q^*},X_{Q^*}), K\diam(Q^*)\big)\]
and
\[\dist\big((t,X),E\big) \gtrsim \eta^3d[(t,X)] \gtrsim \eta^3 (K-1)\diam(Q^*) \approx \eta^{2} \diam(Q),\]
where the implicit constants depend on dimension and ADR. Letting $I \in \W$ be such that $(t,X) \in I$
it follows that $I \in \W^*_Q$, provided $\eta$ is sufficiently small.

\noindent
{\bf Case 2}: $x_n - g(t,x') \le \eta^3d[(t,X)]$.
In this case, note that
\begin{equation}\label{eq3.19}
\dist\big((t,X),G(t,x')\big) \leq x_n - g(t,x') \le \eta^3d[(t,X)] \,.
\end{equation}
Thus, since $d$ is Lipschitz with norm 1 with respect to $\dist(\cdot)$, we have,  for $\eta^3 < 1/2$,
\begin{equation}\label{eq3.20}
(1/2) d[(t,X)] \le d[G(t,x')] \le 2d[(t,X)].
\end{equation}
In particular, $d[G(t,x')] > 0$. Notice then that
\[
d[G(t,x')] \le 2d[(t,X)]  \le (K/16 + 2)\diam(Q_{\sbf}) \le \tfrac{K-1}{8}\diam(Q_{\sbf})\,,
\]
provided that $K$ is large enough, and Lemma \ref{stdistutil.lem} then
yields $Q^* \in \sbf$ such that
\begin{equation}\label{eq3.21}
\dist(G(t,x'), Q^*) \le \frac{(K-1)}{4} \diam(Q^*) \approx d[G(t,x')].
\end{equation}
Combining the latter estimate with \eqref{eq3.19} and \eqref{eq3.20}, we see that
\[(t,X) \in B\big((t_{Q^*},X_{Q^*}), ((K-1)/2)\diam(Q^*)\big).\]
\begin{claim}\label{gorderdg.eq} For $\eta$ chosen small enough ($\eta^2 < 1/2$ will suffice at this stage),
\[\dist(G(t,x'), E) \ge \eta^2 d[G(t,x')]\,.\]
\end{claim}
Taking the claim for granted momentarily,  by
\eqref{eq3.19}, \eqref{eq3.20}, and \eqref{eq3.21}, we have \begin{align*}
\dist((t,X), E) & \ge  \dist(G(t,x'),E)  - \dist((t,X), G(t,x'))
\\&\ge \dist(G(t,x'),E) - \eta^3d[(t,X)] \gtrsim \eta^2 d[(t,X)] \approx \eta^2[G(t,x')]
\\ &\gtrsim \eta^2(K-1) \diam(Q^*) \approx \eta  \diam(Q^*)
\end{align*}
and the lemma is proved. It remains to prove Claim \ref{gorderdg.eq}.
\begin{proof}[Proof of Claim \ref{gorderdg.eq}]
Let $(s,Y)= (s,y',y_n) \in E$ be such that
\[
\dist( G(t,x'),(s,Y)) = \dist(G(t,x'), E)\,.
\]
Assume, for the sake of obtaining a contradiction, that 
\[\dist(G(t,x'),(s,Y)) < \eta^2 d[G(t,x')].\]
Then for
$\eta^2 < 1/2$, since $d$ is Lipschitz with norm 1 with respect to $\dist(\cdot)$, we have
\[d[G(t,y')] \le 2 d[(s,Y)]\,.\]
Hence, under the current assumption that $\dist(G(t,x'),(s,Y)) < \eta^2 d[G(t,x')]$,
\[|y' - x'| + |t-s|^{1/2} < 2\eta^{2}d[(s,Y)].\]
Since $g$ is Lip(1/2,1) with constant $3\eta^{\alpha}$ (in particular less than 1)
\[\dist(G(t,x'), G(s,y'))\lesssim \eta^2 d[(s,Y)].\]
Thus,
\[
|y_n - g(s,y')| = \dist\big((s,Y) , G(s,y')\big) \lesssim \eta^{2}d[(s,Y)],\]
which contradicts the conclusion of Lemma \ref{dgcloseeta.lem}, provided that
\begin{equation} \label{eq3.23}
(s,Y) \in B\big((t_{Q_\sbf},X_{Q_\sbf}), (K/4)\diam(Q_{\sbf})\big)\,.
\end{equation}
Indeed, the latter is true, as we now show.
Recall that by hypothesis
\begin{equation*}
(t,X) \in B\big((t_{Q_\sbf},X_{Q_\sbf}), (K/32)\diam(Q_{\sbf})\big)\,.
\end{equation*}
Moreover, in the scenario of Case 2,
\[
\dist\big((t,X) ,G(t,x')\big)= x_n - g(t,x') < \eta^3 d[(t,X)] \le \diam(Q_\sbf)\,,
\]
and therefore \[\dist\big(G(t,x'), (t_{Q_\sbf}, X_{Q_\sbf})\big) < (K/16)\diam(Q_\sbf).\]
Since $(s,Y)$ is the closest point on $E$ to $G(t,x')$, it must be that
\[(s,Y) \in B\big((t_{Q_\sbf},X_{Q_\sbf}), (K/8)\diam(Q_\sbf)\big)\,.\]
In particular, \eqref{eq3.23} holds, and this proves the claim.
\end{proof}
\end{proof}

Our next goal is to produce a regular version of the graphs we have constructed above. The vehicle for this regularization is the following lemma.

\begin{lemma}\label{regL121.lem}
Let $h: \rn \to \re$, $h(t,x') \ge 0$, be a Lip(1/2,1) function with Lip(1/2,1) constant (at most) $1$. There exists a function $H: \rn \to \re$ such that
\begin{enumerate}
    \item $c_1h(t,x') \le H(t,x') \le c_2 h(t,x')$ for all $(t,x') \in \rn$.
    \item If $Z = \{(t,x'): h(t,x') = 0\}$ then
    \[h(t,x')^{2m-1}|\partial_t^m H(t,x')| + h(t,x')^{m-1}|\nabla_{x'}^m H(t,x')| \le c_{n,m}, \quad \forall (t,x') \in Z^c, m \in \mathbb{N}.\]
    \item $H \in Lip(1/2,1)$ with constant less than $c_3$.
\end{enumerate}
Here $c_1,c_2,c_3$ depend on dimension alone and $c_{n,m}$ depends on dimension and $m$.
Moreover, $H$ enjoys the estimate
\[\|D_t^{1/2}H\|_{P\text{-}BMO} \le c_4,\]
where $c_4$ depends only on dimension.
\end{lemma}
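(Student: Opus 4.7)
The plan is to realize $H$ as a Whitney-type smoothing of $h$ at the \emph{intrinsic} scale $h(\cdot)$ itself. The key observation is that since $h$ is Lip(1/2,1) with constant $1$, at any $p_0 \in Z^c$ the parabolic ball $B_p(p_0, h(p_0)/100)$ is contained in $Z^c$ and $h$ is pinched in $[0.99\,h(p_0),\,1.01\,h(p_0)]$ throughout this ball. By a parabolic Vitali covering argument I would extract a collection $\{B_p(x_i, r_i/5)\}_i$ of pairwise disjoint parabolic balls with $r_i := h(x_i)/100$, such that the enlargements $\{B_p(x_i, r_i)\}$ cover $Z^c$ and the further enlargements $\{6B_p(x_i, r_i)\}$ have bounded overlap (depending only on $n$). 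I would then choose a subordinate smooth partition of unity $\{\phi_i\}$ on $Z^c$ with standard parabolic derivative bounds $|\nabla_{x'}^j \partial_t^k \phi_i| \lesssim r_i^{-(j+2k)}$, and set
\[
H(t,x') \,:=\, \sum_i h(x_i)\,\phi_i(t,x') \text{ on } Z^c, \qquad H \equiv 0 \text{ on } Z.
\]

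Property (1) is immediate, since $h(x_i) \sim h(t,x')$ whenever $(t,x') \in \mathrm{supp}\,\phi_i$. Property (2) follows by differentiating the partition-of-unity sum term by term: only $O(1)$ of the $\phi_i$'s are nonzero at any point, each has $r_i \sim h(t,x')$ on its support, so
\[
|\nabla_{x'}^m H(t,x')| \,\lesssim\, h(t,x') \cdot h(t,x')^{-m} \,=\, h(t,x')^{1-m},
\]
and similarly $|\partial_t^m H(t,x')| \lesssim h(t,x')^{1-2m}$. For (3), the bound in (2) yields Lip(1/2,1) of $H$ locally on $Z^c$, while combining $0 \le H \le c_2 h$ with the Lip(1/2,1) of $h$ (and $h \equiv 0$ on $Z$) handles the cases where one or both of the compared points lie in or near $Z$.

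The main obstacle is the final estimate $\|D_t^{1/2} H\|_{P\text{-}BMO} \le c_4$, since being Lip(1/2,1) does \emph{not} by itself imply that the half-order time derivative lies in $P$-BMO. For this I would work from the second-difference representation
\[
D_t^{1/2} H(t,x') \,=\, \hat c \int_0^\infty \frac{H(t+\tau,x') + H(t-\tau,x') - 2 H(t,x')}{\tau^{3/2}}\, d\tau,
\]
and for each parabolic ball $B \subset \mathbb{R}^n$ of radius $R$ split the integral at $\tau = R^2$ and again at $\tau = h(t,x')^2/4$. Differences between two points of $B$ in the tail $\tau > R^2$ contribute $O(1)$ by the Lip(1/2,1) bound on $H$ (the $\tau^{-3/2}$ weight converges after subtracting values at a fixed base point in $B$). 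For $h(t,x')^2/4 < \tau \le R^2$ the Lip(1/2,1) bound on $H$ gives integrand $\lesssim \tau^{-1}$, whose primitive is a logarithm in $h(t,x')$; the parabolic-BMO character of such a logarithm (since $h$ is Lip(1/2,1)) supplies the uniform $L^2(B)$ average. Finally, for $0 < \tau \le h(t,x')^2/4$ the interval $[t-\tau, t+\tau]$ stays in the Whitney region of scale $h(t,x')$, so the Taylor estimate combined with $|\partial_t^2 H| \lesssim h^{-3}$ from (2) gives $|H(t+\tau,x') + H(t-\tau,x') - 2H(t,x')| \lesssim \tau^2 h(t,x')^{-3}$, and integration against $\tau^{-3/2}$ produces an $O(1)$ contribution. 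Summing these three pieces gives a uniform BMO bound. The delicate step here is the logarithmic middle piece, where one must invoke the classical parabolic-BMO bound for $\log h$ (valid because $h$ is non-negative Lip(1/2,1)); this is where the proof is tightest and where I anticipate the bulk of the technical work.
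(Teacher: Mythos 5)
Your construction of $H$ matches the paper's: both produce a Whitney-type smoothing of $h$ at the intrinsic scale $h(\cdot)$, with $H \approx h$, the derivative bounds $|\nabla_{x'}^j\partial_t^k H| \lesssim h^{1-j-2k}$ from bounded overlap, and Lip(1/2,1) control. (The paper selects maximal parabolic dyadic cubes $I_i$ with $\diam(I_i) \le (1/20)\inf_{I_i} h$ rather than a Vitali family of balls, but this is cosmetic, and your proofs of (1)--(3) are essentially the ones in the paper.) The two proofs genuinely diverge, however, on the $P$-BMO estimate for $D_t^{1/2}H$, and it is there that yours has a gap. The paper does not estimate $D_t^{1/2}H$ pointwise at all: it invokes the Dorronsoro-type characterization from [HLN-BP, pp.~370--373], reducing $\|D_t^{1/2}H\|_{P\text{-}BMO}\lesssim 1$ to a Carleson measure bound on the $\beta$-numbers $\hat\gamma(\tau,z',r)$, and then verifies that bound by counting Whitney cubes: the packing estimate $\sum_{i:\, I_i \subset B(\rho)} \diam(I_i)^{n+1} \lesssim \rho^{n+1}$, coming from the bounded overlap of $\{I_i\}$, is what drives the argument.

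Your direct second-difference approach correctly disposes of the near range $0<\tau \lesssim h(t,x')^2$ (Taylor plus $|\partial_t^2 H|\lesssim h^{-3}$) and of the tail $\tau > R^2$ (Lip(1/2,1) after subtracting a base-point value). But in the middle range $h(t,x')^2/4 < \tau \le R^2$ the crude bound $|H(t+\tau,x')+H(t-\tau,x')-2H(t,x')|\lesssim \tau^{1/2}$ only yields the pointwise estimate $\lesssim \log\big(R/h(t,x')\big)$, and you then close by invoking ``the classical parabolic-BMO bound for $\log h$ (valid because $h$ is non-negative Lip(1/2,1)).'' That auxiliary claim is false: for a general non-negative Lip(1/2,1) function, $\log h$ need not lie in $P$-BMO and need not even be locally integrable. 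For instance, $h(t,x')=e^{-1/|t|}$ for $0<|t|\le 1$, $h(0,x')=0$ (extended Lipschitz-ly and bounded below off a neighborhood of $t=0$) is Lip(1/2,1) with constant $O(1)$ and vanishes only on $\{t=0\}$, yet $\log h=-1/|t|$ fails to be integrable across $t=0$. The true classical fact is that $\log\dist(\cdot,E)\in\mathrm{BMO}$ uniformly over closed sets $E$; but $h$ can be far smaller than $\dist(\cdot,Z)$, and this is exactly where your logarithm blows up. To salvage a direct second-difference proof you would have to exploit cancellation that the bare $\tau^{1/2}$ bound discards — in effect, rediscover the Whitney-cube packing — which is precisely what the paper's reduction to $\hat\gamma$ accomplishes cleanly.
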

The proof has many `standard' elements (if one knows where to look), but is a little lengthy.  The proof can be found in the appendix.

Now we are ready to create our regularized graph.
Let $h(t,x') := (1/2)d[F(t,x')]$ and let $H(t,x')$ be the function provided\footnote{See the proof of Lemma \ref{dgisbasicallydf.lem}, from which one can easily deduce that $d[F(t,x')]$ has Lip(1/2,1) norm less than $1 + \eta$ and hence $h(t,x')$ has Lip(1/2,1) norm less than $1$. This allows one to apply Lemma \ref{regL121.lem}.} by Lemma \ref{regL121.lem}. We define two functions
\[\psi^\pm_{\eta,\sbf} (t,x) := f(t,x) \pm \eta^{15/16}H(t,x').\]
We hope that it is clear to the reader that the function
\[g^-_\alpha (t,x') := f(t,x') - \eta^\alpha d[F(t,x')]\]
has properties analogous to those of $g_\alpha(t,x')$ except that $g_\alpha^-$ is below $f$ and $E$, that the cubes in $\W_Q^-$ are below the graph of $g_\alpha^-$, etc..  We next deduce that $\psi^\pm_{\eta, \sbf}$ has the same properties as the functions $g_\alpha$ (and $g_\alpha^-$), enumerated  below.

\begin{proposition}\label{stopdomainconst.prop}
Let $E$ be uniformly rectifiable in the parabolic sense. Let $\dd(E) = \G \cup \B$, $\G = \cup_{\sbf^* \in \mathcal{S}} \sbf^*$, $\{\Gamma_{\sbf^*}\}_{\sbf^* \in \mathcal{S}}$ be the  bilateral corona  decomposition of $E$ given by Lemma \ref{bilateralcorona.lem}, with constants $\eta \ll 1$ and $K = \eta^{-1}$, and $b_2$. Let $M_0$ be the constant from
Lemma \ref{pullbackCME.lem} below, with $\tilde{b}_1 = 2$ and $\tilde{b}_2 = 1 + b_2$. If $\eta$ is sufficiently small, depending only on dimension and ADR, then the following holds.

Let $\sbf^* \in \mathcal{S}$. Then for every coherent subregime $\sbf$ of $\sbf^*$,
there is a $t$-independent plane $P_\sbf$ and two regular parabolic graphs  $\Gamma_\sbf^\pm$ over $P_\sbf$ given by functions $\psi^\pm_{\eta,\sbf}$, with $\|\psi^\pm_{\eta,\sbf}\|_{Lip(1/2,1)} \le C_{n} \eta^{15/16}$ and $\|D_t^{1/2}\psi^\pm_{\eta,\sbf}\|_{P\text{-}BMO} \le (1 + b_2)$, with the following properties (in the coordinates given by $P_\sbf \oplus P_\sbf^\perp$):

\begin{enumerate}

   \item If $Q \in \sbf$, then $\W_Q$ has a disjoint decomposition $\W_Q = \W_Q^+ \cup \W_Q^-$, and if we let $U_Q^\pm := \cup_{I \in \W_Q^\pm} I$, then
  \[U_Q^\pm \subseteq B((t_{Q_\sbf}, X_{Q_\sbf}), K^{3/4}\diam(Q_\sbf)) \cap \{\pm x_n > \pm\psi^\pm_{\eta,\sbf}(t,x')\}.\]
    Here the notation $\{\pm x_n > \pm \psi^\pm_{\eta,\sbf}(t,x')\}$ means
    \[\{(t,X)= (t,x',x_n): \pm x_n > \pm\psi^\pm_{\eta,\sbf}(t,x')\}.\]
    In particular,
    \[\bigcup_{Q \in \sbf} U_Q^\pm \subseteq B((t_{Q_\sbf}, X_{Q_\sbf}), K^{3/4}\diam(Q_\sbf)) \cap \{\pm x_n > \pm\psi^\pm_{\eta,\sbf}(t,x')\}.\]
    \item If $(t,X) \in \bigcup_{Q \in \sbf} U_Q^\pm$ then
    \[\dist((t,X),E) \approx_\eta \dist((t,X),\Gamma_\sbf^\pm).\]
    \item If $  (t,X) \in B((t_{Q_\sbf}, X_{Q_\sbf}), (K/32)\diam(Q_\sbf)) \cap \{\pm x_n > \pm\psi^\pm_{\eta,\sbf}(t,x')\} $, then
     \[
\dist((t,X),E) \ge \dist((t,X),\Gamma_\sbf^\pm).
\]
    \item
  $B((t_{Q_\sbf}, X_{Q_\sbf}), (K/32)\diam(Q_\sbf)) \cap \{\pm x_n > \pm\psi^\pm_{\eta,\sbf}(t,x')\} \subseteq \bigcup_{Q \in \sbf} U_Q^*. $

  \medskip

    \item There exist  $(t_\sbf^\pm, X_\sbf^\pm) \in \Gamma_\sbf^\pm$ such that
        \begin{equation}\label{prop5a.eq}
    B((t_{Q_\sbf}, X_{Q_\sbf}), M_0 K^{3/4}\diam(Q_\sbf)) \subset
    B((t_\sbf^\pm, X_\sbf^\pm), K^{7/8}\diam(Q_\sbf))
    \end{equation}
    and
    \begin{equation}\label{prop5b.eq}
        B((t_\sbf^\pm, X_\sbf^\pm), K^{7/8}\diam(Q_\sbf))\subset B((t_{Q_\sbf}, X_{Q_\sbf}), (K/32)\diam(Q_\sbf)).
    \end{equation}
\end{enumerate}
\end{proposition}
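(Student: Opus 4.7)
The overall plan is to reduce the proposition to the properties already established for the non-smooth stopping-time surfaces $g_\alpha^\pm := f \pm \eta^\alpha d[F]$ in Lemmas \ref{dgisbasicallydf.lem}--\ref{domainsinbigcubes.lem}. The key observation is the pointwise comparability $c_1 h \le H \le c_2 h$ from Lemma \ref{regL121.lem}(1) (with $h = (1/2) d[F]$), which, for $\eta$ small enough depending on $c_1, c_2$, gives $\alpha_1, \alpha_2 \in [7/8, 31/32]$ with
\[
\eta^{\alpha_1} d[F(t,x')] \;\le\; \eta^{15/16} H(t,x') \;\le\; \eta^{\alpha_2} d[F(t,x')] \qquad \forall (t,x') \in \rn,
\]
so that $g^+_{\alpha_2} \le \psi^+_{\eta,\sbf} \le g^+_{\alpha_1}$ (and similarly for $\psi^-$). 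Since the conclusions of Lemmas \ref{dgisbasicallydf.lem}--\ref{domainsinbigcubes.lem} are uniform in $\alpha \in [7/8, 31/32]$, all the relevant conclusions transfer to $\psi^\pm_{\eta,\sbf}$. The regularity claims follow immediately from this: the Lip(1/2,1) bound combines $\|f\|_{Lip(1/2,1)} \le \eta$ from Lemma \ref{bilateralcorona.lem}(3) with Lemma \ref{regL121.lem}(3), while the $P$-BMO half-derivative estimate uses $\|D_t^{1/2}f\|_{P\text{-}BMO} \le b_2$ and the ``moreover'' part of Lemma \ref{regL121.lem}, with $\eta^{15/16}c_4 \le 1$ for $\eta$ small.

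I would then verify items (1)--(4) as direct corollaries of the corresponding statements for $g_\alpha^\pm$. For (1), Lemma \ref{WhitneyaboveGamma.lem} yields $\W_Q = \W_Q^+\cup\W_Q^-$; the sandwich and the lower bound \eqref{eq3.16} give $x_n - \psi^+(t,x') \ge x_n - g^+_{\alpha_1}(t,x') \ge (1/2)\eta^{1/2}\diam(Q) > 0$ on $U_Q^+$, and the ball containment follows from the definition of $\W_Q$ and coherence of $\sbf$ (any point of $I \in \W_Q$, $Q \in \sbf$, lies within $3K^{1/4}\diam(Q_\sbf) \le K^{3/4}\diam(Q_\sbf)$ of $(t_{Q_\sbf},X_{Q_\sbf})$ provided $K \ge 9$). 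For (2), the sandwich together with Lemma \ref{whitneygammaplus.lem} gives $x_n - \psi^+ \approx x_n - g \approx \dist((t,X),E)$ on $U_Q^+$, and Lemma \ref{disttoflatgraph.lem} then yields $\dist((t,X),\Gamma_\sbf^+) \approx \dist((t,X),E)$. For (3) and (4), a point above $\psi^+_{\eta,\sbf}$ is automatically above $g^+_{\alpha_2}$, so Lemma \ref{domainsinbigcubes.lem} applies directly.

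For item (5), I would take the corkscrew points to be
\[
(t_\sbf^\pm, X_\sbf^\pm) \;:=\; \bigl(t_{Q_\sbf},\; x_{Q_\sbf}',\; \psi^\pm_{\eta,\sbf}(t_{Q_\sbf}, x_{Q_\sbf}')\bigr),
\]
where $X_{Q_\sbf} = (x_{Q_\sbf}', X_{Q_\sbf, n})$. Applying Lemma \ref{dfcloseeta.lem} at the center $(t_{Q_\sbf},X_{Q_\sbf}) \in E$ yields $|X_{Q_\sbf,n} - f(t_{Q_\sbf}, x_{Q_\sbf}')| \lesssim \eta\, d[(t_{Q_\sbf},X_{Q_\sbf})] \lesssim \eta\diam(Q_\sbf)$, while $0 \le \eta^{15/16}H(t_{Q_\sbf}, x_{Q_\sbf}') \lesssim \eta^{15/16} d[F(t_{Q_\sbf}, x_{Q_\sbf}')] \lesssim \eta^{15/16}\diam(Q_\sbf)$ since the center of $Q_\sbf$ lies in $Q_\sbf$. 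Hence
\[
\dist\bigl((t_{Q_\sbf},X_{Q_\sbf}),\, (t_\sbf^\pm, X_\sbf^\pm)\bigr) \;\lesssim\; \eta^{15/16}\diam(Q_\sbf),
\]
and the containments \eqref{prop5a.eq}--\eqref{prop5b.eq} reduce by the triangle inequality to the inequalities $M_0 K^{3/4} + C\eta^{15/16} \le K^{7/8}$ and $K^{7/8} + C\eta^{15/16} \le K/32$, both of which hold for $\eta = K^{-1}$ sufficiently small.

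The main obstacle is the bookkeeping of dependencies in item (5), because the required smallness of $\eta$ must accommodate the constant $M_0$ coming from the later Lemma \ref{pullbackCME.lem}. Since $M_0$ depends only on the a priori fixed bounds $\tilde b_1 = 2$ and $\tilde b_2 = 1 + b_2$, this is a harmless additional smallness condition that does not create a circular dependence. The conceptual content of the proof is concentrated in the sandwich observation, which cleanly decouples the smoothing step (Lemma \ref{regL121.lem}) from the stopping-time geometry encoded in Lemmas \ref{dgisbasicallydf.lem}--\ref{domainsinbigcubes.lem}.
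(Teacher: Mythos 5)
Your proof is essentially correct and follows the same route as the paper: sandwich $\psi^\pm_{\eta,\sbf}$ between the rough stopping-time graphs $g^\pm_\alpha$ for two exponents in $[7/8,31/32]$ using $H \approx_n d[F]$ from Lemma~\ref{regL121.lem}(1), and then transfer all of Lemmas~\ref{WhitneyaboveGamma.lem}--\ref{domainsinbigcubes.lem} to $\psi^\pm_{\eta,\sbf}$. One small notational slip: from $\eta^{\alpha_1}d[F]\le \eta^{15/16}H\le \eta^{\alpha_2}d[F]$ (which forces $\alpha_1>15/16>\alpha_2$) the sandwich should read $g^+_{\alpha_1}\le\psi^+_{\eta,\sbf}\le g^+_{\alpha_2}$, not the reverse as you wrote; your subsequent applications in items (3) and (4) (``a point above $\psi^+$ is above $g^+$'' for the exponent close to $31/32$) are then consistent with the corrected ordering, and match the paper's \eqref{gsmoothedcomp.eq}. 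Your choice of $(t_\sbf^\pm,X_\sbf^\pm)$ in item (5) --- lifting $(t_{Q_\sbf},x'_{Q_\sbf})$ directly to $\Gamma^\pm_\sbf$ via Lemma~\ref{dfcloseeta.lem} --- differs slightly from the paper's (which first takes the closest point of $\Gamma_{\sbf^*}$ to the center and then lifts), but both yield $\dist\big((t_{Q_\sbf},X_{Q_\sbf}),(t_\sbf^\pm,X_\sbf^\pm)\big)\lesssim\eta^{15/16}\diam(Q_\sbf)$, which suffices.
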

\begin{proof}
Let $\psi^\pm_{\eta,\sbf}$ be as constructed before the statement of the proposition. Both $\psi^\pm_{\eta,\sbf}$ are  Lip(1/2,1) with constant less than $C\eta^{15/16}$ because $f$ is Lip(1/2,1) with constant less than $\eta$ and $H$ has Lip(1/2,1)
norm less than $c_3= c_3(n)$ (see Lemma \ref{regL121.lem}). Similarly,
\begin{multline}
\|D_t^{1/2}\psi^\pm_{\eta,\sbf}\|_{P\text{-}BMO} \le \|D_t^{1/2}f\|_{P\text{-}BMO} + \eta^{15/16} \|D_t^{1/2}H\|_{P\text{-}BMO}
\\ \le \|D_t^{1/2}f\|_{P\text{-}BMO} + c_4\eta^{15/16} <  b_2 +1,
\end{multline}
provided $\eta$ is sufficiently small.
We define $\Gamma_\sbf^\pm$ to be the graphs of $\psi^\pm_{\eta,\sbf}$ (resp.).

We claim that
 \begin{equation}\label{gsmoothedcomp.eq}
g_{7/8}(t,x') \ge  \psi^+_{\eta,\sbf}(t,x') \ge g_{31/32}(t,x')\,,
\end{equation}
and
 \begin{equation}\label{gsmoothedcompminus.eq}
g^-_{7/8}(t,x') \le  \psi^-_{\eta,\sbf}(t,x') \le g^-_{31/32}(t,x').
\end{equation}
Indeed, these inequalities are a result of the fact that
 $\eta^{7/8} \gg \eta^{15/16} \gg \eta^{31/32}$
when $\eta$ is very small,
 and the properties of $H$ in relation to $d$. For example,
\[ \psi^+_{\eta,\sbf}(t,x') - g_{31/32}(t,x') = \eta^{15/16} H(t,x') - \eta^{31/32}d[F(t,x')].\]
Using Lemma \ref{regL121.lem} we have
\[d[F(t,x')] = 2h(t,x') \approx_n H(t,x').\]
Since the constants are independent of $\eta$ the
second inequality in \eqref{gsmoothedcomp.eq} follows. The other inequalities are treated similarly.

With \eqref{gsmoothedcomp.eq}-\eqref{gsmoothedcompminus.eq} at hand, properties (1) and (2) can be deduced directly from Lemmas \ref{WhitneyaboveGamma.lem} and \ref{whitneygammaplus.lem}. Note that to
prove property (2), we observe that $(t,X) \in U_Q$ implies that $(t,X)$ is above the graphs of {\it both} $g_{7/8}$ and $g_{31/32}$. Similarly, properties (3) and (4) can be deduced from \eqref{gsmoothedcomp.eq}
(or \eqref{gsmoothedcompminus.eq}), and Lemma \ref{domainsinbigcubes.lem}:
to prove (3) and (4) in, e.g., the context of $\psi^+_{\eta,\sbf}(t,x')$,
we simply observe that if $(t,X)$ is above $\psi^+_{\eta,\sbf}(t,x')$, then it is above $g_{31/32}$.

To prove (5), we let $(s,Y)$ be the closest point on $\Gamma_{\sbf}$ to $(t_{Q_\sbf},X_{Q_\sbf})$ and we observe from
Lemma \ref{bilateralcorona.lem}(3) that
\[
\dist\big((t_{Q_\sbf},X_{Q_\sbf}),(s,Y)\big) \le \eta \diam(Q_\sbf)\,.
\]
As $(s,Y) = (s,y',y_n) = F(s,y')$ we have that
\[H(s,y') \approx_{n} d[F(s,y')] \le \diam(Q_\sbf) + \eta \diam(Q_\sbf) < 2 \diam(Q_\sbf),\]
where we have used the properties of $H$ given by Lemma \ref{regL121.lem}. Then by definition
\begin{align*}
\dist\big((t_{Q_\sbf},X_{Q_\sbf}), (s,y',\psi^\pm_{\eta,\sbf}(s,y')\big)
&\le \dist\big((t_{Q_\sbf},X_{Q_\sbf}),(s,Y)\big) + \eta^{15/16}H(s,y')
\\& \lesssim_{n}\eta^{15/16}\diam(Q_\sbf).
\end{align*}
Setting $(t_\sbf^\pm, X_\sbf^\pm)  = (s,y',\psi^\pm_{\eta,\sbf}(s,y'))$, and taking $\eta$ sufficiently small (and hence $K$ sufficiently large),  yield (5).
\end{proof}

\section{Carleson Measure Estimates: Proof of Theorems \ref{PURCMEMT.thrm} and \ref{PURCMEMT2.thrm}}\label{proofmt.sect}

Before we get into the details of proving Theorems  \ref{PURCMEMT.thrm} and \ref{PURCMEMT2.thrm}, we point out that the domains we produced in Proposition \ref{stopdomainconst.prop} support (a local version of) the Carleson measure estimate.

\begin{lemma}[{\cite[Lemma A.2]{HL-JFA} }]\label{pullbackCME.lem}
 Let $\tilde{b}_1,\tilde{b}_2$ be fixed non-negative constants.
Suppose that $\varphi(t,x')$ is a regular Lip(1/2,1) function with Lip(1/2,1) constant $\tilde{b}_1$ and such that $\| D_t^{1/2} \varphi \|_{P\text{-}BMO} \le \tilde{b}_2$. Let
\[\om^+ = \{(t,X) = (t,x',x_n): x_n > \varphi(t,x')\}.\]
Then there exist $M_0 = M_0(n,\tilde{b}_1,\tilde{b}_2) > 1$ and $c_5 = c_5(n,\tilde{b}_1,\tilde{b}_2)$, such that if $u$ is a bounded solution to $(\partial_t - \Delta_x)u = 0$ in
\[\om^+((t_0,X_0),M_0r):= B((t_0,X_0), M_0r) \cap \om^+,\]
for some $(t_0,X_0)\in \pom$, then
\begin{equation}\label{localSleNtypeest.eq}
\iint\limits_{B((t_0,X_0),r) \cap \om^+} |\nabla u(s,Y)|^2 \tilde{\delta}(s,Y) \,  dY \, ds \le c_5 r^{n+1} \|u\|_{L^\infty(\om^+((t_0,X_0),M_0r))}^2.
\end{equation}
Here $\tilde{\delta}(s,Y) = \dist((s,Y), \pom^+)$.
An analogous statement holds for bounded solutions to
$(\partial_t - \Delta_x)u = 0$ in $\om^-((t_0,X_0),M_0r):= B((t_0,X_0), M_0r) \cap \om^-$, where
\[\om^- = \{(t,X) = (t,x',x_n): x_n < \varphi(t,x')\}.\]
\end{lemma}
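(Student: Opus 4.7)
The plan is to follow the strategy from \cite{HL-JFA} by flattening the graph $\partial \Omega^+$ via a parabolic pullback adapted to regular Lip(1/2,1) functions, reducing to a Carleson measure estimate for bounded null-solutions of a divergence-form parabolic operator on the upper half-space.

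First I would introduce the pullback $\rho: \mathbb{R}^{n+1}_+ \to \Omega^+$ of the form
\[
\rho(t,x',\lambda) := \big(t,\, x',\, \lambda + P_\lambda \varphi(t,x')\big),
\]
where $P_\lambda \varphi$ is a parabolic mollification of $\varphi$ at anisotropic scale $\lambda$ (e.g., a parabolic Poisson-type extension, or a Lewis-Murray/Hofmann-Lewis regularization). Since $\varphi$ is Lip(1/2,1) with constant $\tilde{b}_1$, the map $\rho$ is bi-Lipschitz with respect to the parabolic metric (provided the mollification constant is chosen so that $\partial_\lambda \rho_n \approx 1$), so $\dist((s,Y),\partial\Omega^+) \approx \lambda$ whenever $(s,Y) = \rho(t,x',\lambda)$. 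Writing $v(t,x',\lambda) := u(\rho(t,x',\lambda))$, one computes that $v$ satisfies a divergence-form parabolic equation
\[
\partial_t v \,=\, \operatorname{div}_{x',\lambda}\big(A(t,x',\lambda) \nabla_{x',\lambda} v\big) \,+\, B(t,x',\lambda)\cdot \nabla_{x',\lambda} v
\]
on the upper half-space, where $A$ is bounded, uniformly elliptic, and $B$ is bounded, with bounds depending only on $n$ and $\tilde{b}_1$.

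Second, the crucial point is that the coefficients of this pulled-back operator satisfy a Carleson measure condition. Precisely, because $\varphi$ is \emph{regular} Lip(1/2,1), i.e.\ $\|D_t^{1/2}\varphi\|_{P\text{-}BMO} \le \tilde b_2$, the mollifier $P_\lambda \varphi$ enjoys the estimates
\[
\lambda\,|\nabla_{x'} P_\lambda \varphi|^2 \;+\; \lambda^3\,|\partial_t P_\lambda \varphi|^2 \,+\, \lambda^3|\nabla_{x'}^2 P_\lambda \varphi|^2 \,\in\, \mathcal{C}_{\tilde b_1,\tilde b_2}(\mathbb{R}^{n+1}_+),
\]
i.e.\ these are Carleson measures on $\mathbb{R}^{n+1}_+$. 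Inspecting the expressions for $A$ and $B$ that come out of the change of variables, this translates to $\lambda\,(|A - A_0|^2 + |B|^2)\, d\lambda\, dx'\, dt$ being a Carleson measure on the half-space, where $A_0$ is a constant-coefficient elliptic matrix.

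Third, for divergence-form parabolic operators on $\mathbb{R}^{n+1}_+$ with coefficients of this Carleson type, a local Carleson measure estimate for bounded null-solutions is known (this is the content of \cite{HL-JFA} Appendix A, and traces back to arguments of Hofmann-Lewis and Lewis-Murray). Concretely, for $v \in L^\infty$ caloric-type in the parabolic cylinder $B((0,X_0),M_0 r)\cap \{\lambda>0\}$, one obtains
\[
\iint_{B((0,X_0),r)\cap\{\lambda>0\}} |\nabla v|^2 \,\lambda\, dx'\, d\lambda\, dt \,\lesssim\, r^{n+1} \|v\|_{L^\infty}^2,
\]
with $M_0$ the enlargement needed to absorb boundary terms in the Caccioppoli-type argument that integrates $\partial_i(v^2 \phi \cdot \cdot)$ against a cutoff $\phi$, combined with the Carleson condition on the coefficients via a standard $T(b)$-style or $L^\infty$-to-$BMO$-square-function argument. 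Undoing the pullback and using $\dist \approx \lambda$ and the chain rule yields \eqref{localSleNtypeest.eq}; the case $\Omega^-$ is symmetric.

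The main obstacle is the combination in Step 2--3: one genuinely uses the $P$-$BMO$ bound on $D_t^{1/2}\varphi$ (not merely Lip(1/2,1)) to get the Carleson condition on the time derivatives in the pulled-back coefficients, which is what allows the standard integration-by-parts/Dahlberg-type argument on the half-space to close. The value of $M_0$ is dictated by the amount of room required to run Caccioppoli against a cutoff and to ensure that the pullback $\rho$, which has bounded but not equal to $1$ distortion, maps $B((t_0,X_0), r)\cap \Omega^+$ inside a set whose preimage is controlled by $B((0,X_0),M_0 r)\cap \{\lambda>0\}$.
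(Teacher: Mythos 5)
Your proposal follows essentially the same route as the paper's proof: apply the parabolic Dahlberg--Kenig--Stein (Hofmann--Lewis/Lewis--Murray) pullback $\rho(t,x',\lambda)=(t,x',\lambda+P_\lambda\varphi(t,x'))$ to flatten $\Gamma$, observe that the regular Lip(1/2,1) hypothesis turns the resulting drift-diffusion coefficients into Carleson-measure perturbations of the heat operator on $\reu$, and then invoke the local Carleson measure estimate from the appendix of \cite{HL-JFA} before undoing the change of variables. The only detail the paper flags that you elide is that \cite[Lemma A.2]{HL-JFA} is stated for global solutions on the half-space, so one must note explicitly that the proof there only uses the equation in a Carleson region -- but your framing already treats it as a local estimate, so this is a matter of emphasis rather than a gap.
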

\begin{proof} The lemma is a consequence of \cite[Lemma A.2]{HL-JFA}. However, to reduce the proof of the lemma to \cite[Lemma A.2]{HL-JFA} one has to note two things. First, by using the  parabolic version of the Dahlberg-Kenig-Stein pullback, the operator $(\partial_t - \Delta)$ is transformed
to an operator of the form treated in \cite[Lemma A.2]{HL-JFA} in the upper half space. Furthermore, $\om^+((t_0,X_0),M_0r)$ is transformed into a region containing a Carleson region of size roughly $M_0r$. Second, while stated for solutions in the upper half space,  \cite[Lemma A.2]{HL-JFA} only uses that $u$ is a solution in a Carleson region.
\end{proof}

Now we are ready to prove Theorem \ref{PURCMEMT.thrm}.
\begin{proof}[Proof of Theorem \ref{PURCMEMT.thrm}]
Let $E$ be  uniformly rectifiable in the parabolic sense and let $u$ be a
bounded solution to $(\partial_t - \Delta_X)u = 0$ in $E^c$. We may assume that $\|u\|_{L^\infty(E^c)} \neq 0$, since the conclusion of Theorem \ref{PURCMEMT.thrm} holds trivially if $\|u\|_{L^\infty(E^c)} = 0$. Let
\[v: = \frac{u}{\|u\|_{L^\infty(E^c)}}.\]
Then $\|v\|_{L^\infty(E^c)} = 1$ and it clearly suffices to prove the theorem with $v$ in place of $u$.

For each $Q \in \dd(E)$ we set
\[\beta_Q =\iint_{U_Q}|\nabla_X v|^2  \delta(s,Y) \, dY\, ds.\]
We first reduce the proof of the theorem to a statement concerning the $\beta_Q$'s.
\begin{claim}\label{betasgoodenough.cl}
If there exists $C$ (independent of $v$) such that
\begin{equation}\label{betasgooden.eq}
\sum_{Q \subseteq Q_0} \beta_Q \le C \sigma(Q_0), \quad \forall Q_0 \in  \dd(E),
\end{equation}
then there exists $C'$ such that
\begin{equation}\label{eq4.6}
\sup_{(t,X) \in E, r > 0} r^{-n-1}
\iint\limits_{B((t,X),r)} |\nabla_X v|^2  \delta(s,Y) \,  dY \, ds \le C' \,.
\end{equation}
In particular, to prove the theorem it is enough to verify \eqref{betasgooden.eq}
\end{claim}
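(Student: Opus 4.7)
The plan is to pass from the hypothesized discrete Carleson packing of the $\beta_Q$'s to the continuous Carleson measure bound \eqref{eq4.6} on parabolic balls $B((t_0, X_0), r)$ with $(t_0, X_0) \in E$. I will slice the integral along the Whitney decomposition of $E^c$, reindex the resulting sum as a sum over dyadic cubes on $E$, and bound the collection of relevant cubes by a bounded family of top cubes of side comparable to $r$, on which the packing hypothesis \eqref{betasgooden.eq} applies directly.

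Fix $(t_0, X_0) \in E$ and $r > 0$, and let $\W_r$ denote the collection of Whitney cubes $I \in \W(E^c)$ with $I \cap B((t_0, X_0), r) \neq \emptyset$. Since $(t_0, X_0) \in E$, every such $I$ satisfies $\dist(I, E) < r$, hence $\diam(I) \approx \dist(I, E) \lesssim r$ by the Whitney property. For $\eta$ sufficiently small and $K = \eta^{-1}$, every Whitney cube $I$ belongs to $\W_Q(\eta, K)$ for at least one, and at most $C(n, \eta, K)$, dyadic cubes $Q \in \dd(E)$: existence comes from selecting $Q$ to contain the closest $E$-point to $I$ with $\diam(Q) \approx \dist(I, E)$, and the uniform bound is the bounded overlap of $\{U_Q\}$ noted after Definition \ref{whitney.def}. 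If $I \in \W_r$ and $I \in \W_Q$, the defining inequalities of $\W_Q$, combined with $\dist(I, E) < r$ and $\dist(I, (t_0, X_0)) < r$, force
\[
\diam(Q) \lesssim_{\eta} r \quad \text{and} \quad \dist(Q, (t_0, X_0)) \lesssim_{\eta, K} r.
\]

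I then choose a constant $C_0 = C_0(\eta, K)$ and an integer $k_0$ with $2^{-k_0} \approx C_0 r$ so that every such $Q$ has $\diam(Q) \le 2^{-k_0}$ and meets $B((t_0, X_0), C_0 r)$, and let $\{Q_0^{(i)}\}_{i=1}^N$ be the cubes in $\dd_{k_0}(E)$ that intersect $B((t_0, X_0), C_0 r)$. By parabolic ADR, $N \lesssim 1$ and $\sum_i \sigma(Q_0^{(i)}) \lesssim r^{n+1}$, and by dyadic nesting each relevant $Q$ sits inside exactly one $Q_0^{(i)}$. Using that $E$ is parabolic ADR of homogeneous dimension $n+1$ in $\ree$, hence of zero $(n+2)$-dimensional Lebesgue measure, I compute
\begin{align*}
\iint_{B((t_0, X_0), r)} |\nabla_X v|^2\, \delta\, dY\, ds
& = \iint_{B((t_0, X_0), r) \setminus E} |\nabla_X v|^2\, \delta\, dY\, ds \\
& \le \sum_{I \in \W_r} \iint_I |\nabla_X v|^2\, \delta\, dY\, ds \\
& \le \sum_{I \in \W_r}\, \sum_{Q \,:\, I \in \W_Q} \iint_I |\nabla_X v|^2\, \delta\, dY\, ds \\
& \le \sum_{i=1}^{N}\, \sum_{Q \subseteq Q_0^{(i)}} \beta_Q
\,\le\, C \sum_{i=1}^{N} \sigma(Q_0^{(i)})\,  \lesssim\, r^{n+1},
\end{align*}
which, upon dividing by $r^{n+1}$ and taking the supremum, yields \eqref{eq4.6}. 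Here the second inequality uses non-negativity and $\#\{Q : I \in \W_Q\} \ge 1$, the third swaps the order of summation and invokes the inclusion of relevant $Q$'s in $\bigcup_i Q_0^{(i)}$ together with the disjointness of the $Q_0^{(i)}$, and the fourth is the assumed packing \eqref{betasgooden.eq}.

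The only delicate point is the bookkeeping above, namely confirming that the dyadic cubes $Q$ attached to the Whitney cubes $I \in \W_r$ all fit inside a bounded union of top cubes at scale $\approx r$. This is driven by the quantitative inclusions built into the definition of $\W_Q$, the Whitney property $\diam(I) \approx \dist(I, E)$, and the ADR of $E$; once it is in place the bounded overlap of the $\W_Q$ collapses the double sum efficiently and the hypothesized packing closes the argument.
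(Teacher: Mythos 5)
Your proof is correct and follows essentially the same approach the paper sketches: reducing to a bounded family of top dyadic cubes $\{Q_0^{(i)}\}$ of side $\approx r$, using the Whitney-region structure to show that $B((t_0,X_0),r)\setminus E$ is covered by the $U_Q$ with $Q$ contained in one of the $Q_0^{(i)}$, and then applying the packing hypothesis to each top cube. You have simply supplied the ``routine details'' the paper omits (existence of a $Q$ with $I\in \W_Q$, the geometric bounds on $\diam(Q)$ and $\dist(Q,(t_0,X_0))$, and the ADR bound on $N$ and $\sum_i\sigma(Q_0^{(i)})$).
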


\begin{proof}[Sketch of Proof of Claim \ref{betasgoodenough.cl}]
To prove that \eqref{betasgooden.eq} implies
 \eqref{eq4.6}, we select a collection $\{Q_0^i\}_i \subset \dd(E)$, such that, for each $i$, $\diam(Q_0^i) \approx \kappa r$, and such that the collection has uniformly
bounded cardinality depending only on $n$ and ADR. Furthermore, $B((t,X),r)\cap E\subset \cup_i Q_0^i$.
Choosing $\kappa$ large enough depending only on allowable parameters, we have that
$B((t,X),r)\setminus E \subset \cup_i \cup_{Q\subset Q_0^i} U_Q$.
We can then apply \eqref{betasgooden.eq} to each $Q_0=Q_0^i$.
We omit the routine details.
\end{proof}

We have now reduced everything to the setting of our dyadic machinery and we are almost ready to begin employing the constructions in Proposition \ref{stopdomainconst.prop}. Notice that these constructions are only likely to be helpful when bounding a $\beta_Q$ when $Q$ is a `good' cube. That is why the following claim is important when handling the `bad cubes'.
\begin{claim}\label{uniBqbound.eq}
There exists a constant $A$ depending only dimension, $K$, $\eta$ and ADR such that
\[\beta_Q \le A \sigma(Q).\]
\end{claim}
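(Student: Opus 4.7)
The plan is to prove Claim \ref{uniBqbound.eq} by the standard Caccioppoli + Whitney argument. Since $U_Q = \bigcup_{I \in \W_Q} I$ and $\#\W_Q \le C(n,\eta,K)$, it suffices to estimate each integral over a single Whitney cube $I \in \W_Q$ and sum.

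Fix $I \in \W_Q$. By the defining property of Whitney cubes we have $\dist(I,E) \approx \diam(I)$, and since $I \in \W_Q$ we moreover have $\diam(I) \approx_{\eta,K} \diam(Q)$. In particular, for every $(s,Y) \in I$, $\delta(s,Y) \le \dist(I,E) + \diam(I) \lesssim_{\eta,K} \diam(Q)$, so
\[
\iint_I |\nabla_X v|^2 \, \delta(s,Y)\, dY\, ds \,\lesssim_{\eta,K}\, \diam(Q) \iint_I |\nabla_X v|^2 \, dY\, ds.
\]
Since $4\dist(I,E) \ge \dist(4I,E)$, a mild dilate $(1+\alpha)I$ (with $\alpha$ depending on the Whitney constants) still satisfies $(1+\alpha)I \subset E^c$, so $v$ is caloric on $(1+\alpha)I$. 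By the Caccioppoli inequality (Lemma \ref{cacc}) and the $L^\infty$-normalization $\|v\|_{L^\infty(E^c)}=1$,
\[
\iint_I |\nabla_X v|^2\, dY\, ds \,\lesssim\, \diam(I)^{-2} \iint_{(1+\alpha)I} |v|^2 \, dY\, ds \,\lesssim\, \diam(I)^{-2}\,|(1+\alpha)I|.
\]
Because $I$ is a parabolic cube of parabolic side-length $\diam(I)$, its Lebesgue measure satisfies $|I| \approx \diam(I)^{n+2}$, and therefore
\[
\iint_I |\nabla_X v|^2 \, \delta(s,Y)\, dY\, ds \,\lesssim_{\eta,K}\, \diam(Q)\cdot \diam(I)^{-2} \cdot \diam(I)^{n+2}\,\approx_{\eta,K}\, \diam(Q)^{n+1}.
\]

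Summing over the (at most $C(n,\eta,K)$-many) cubes $I \in \W_Q$ and invoking ADR, which gives $\sigma(Q)\approx \diam(Q)^{n+1}$, we obtain
\[
\beta_Q \,=\, \sum_{I \in \W_Q} \iint_I |\nabla_X v|^2\, \delta(s,Y)\, dY\, ds \,\lesssim_{n,\eta,K,ADR}\, \diam(Q)^{n+1}\,\approx\, \sigma(Q),
\]
which is the desired bound with $A = A(n,\eta,K,ADR)$. There is no serious obstacle here: the only points requiring any care are verifying that a mild enlargement of each Whitney cube remains in $E^c$ (which follows directly from the Whitney property $4\diam(I) \le \dist(4I,E)$) and that the number of Whitney cubes in $\W_Q$ is controlled (already noted in Definition \ref{whitney.def} by a volume comparison). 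The rest is purely bookkeeping.
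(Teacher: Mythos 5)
Your argument is correct and is exactly the route the paper intends: the paper itself omits the proof, noting only that the claim ``follows readily from Lemma \ref{cacc} and ADR as in \cite{HMM1},'' and your decomposition of $U_Q$ into the boundedly many Whitney cubes in $\W_Q$, each comparable in size to $\diam(Q)$ with dilates contained in $E^c$, followed by Caccioppoli, the $L^\infty$ normalization, and ADR, is precisely that standard computation. The only cosmetic point is that Lemma \ref{cacc} is stated for parabolic balls rather than cubes, so one should either inscribe $I$ in a ball $B$ with $(1+\alpha)B \subset E^c$ or note the trivial cube analogue, and your initial justification that $(1+\alpha)I \subset E^c$ should appeal directly to $4\diam(I)\le\dist(4I,E)$ (as you correctly do in your closing remark) rather than to $4\dist(I,E)\ge\dist(4I,E)$.
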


\begin{proof}[Sketch of Proof of Claim \ref{uniBqbound.eq}]
The claim follows readily from Lemma \ref{cacc} and ADR  as in \cite{HMM1}.
We omit the details.
\end{proof}

We now prove \eqref{betasgooden.eq}. Fix $Q_0 \in \dd(E)$.
If $Q_0 \in \sbf^*$ for some $\sbf^* \in \mathcal{S}$ we let $\sbf = \sbf^* \cap \dd_{Q_0}$ and note that 
$\sbf$ is a coherent subregime of $\sbf^*$ with maximal cube $Q_0$. $\dd_{Q_0}$ has the disjoint decomposition
\begin{equation}\label{DQdecomp.eq}\dd_{Q_0} = \{Q \in \B: Q \subseteq Q_0\} \cup
\big(\cup_{\sbf^*: Q(\sbf^*) \subset Q_0 } \sbf^*\big)
\cup \sbf,
\end{equation}
where  $\sbf = \emptyset$ if $Q_0$ is not in a stopping time regime (i.e., if $Q_0$ is a bad cube).
By Lemma \ref{bilateralcorona.lem}(2) and Claim \ref{uniBqbound.eq}
\begin{equation}\label{bdcubbd.eq}
\sum_{\substack{Q \subseteq Q_0 \\Q \in \B}} \beta_Q \le C\sum_{\substack{Q \subseteq Q_0 \\Q \in \B}} \sigma(Q) \le CC_{\eta,K} \sigma(Q_0).
\end{equation}
Let us suppose, for the moment, that we can show that
\begin{equation}\label{sbfpackbeta.eq}\sum_{Q \in \sbf^*} \beta_Q \le C\sigma\big(Q(\sbf^*)\big)\,,
\end{equation}
for all $\sbf^*$ such that $Q(\sbf^*) \subset Q_0$ and that
\begin{equation}\label{sbfprimepackbeta.eq}\sum_{Q \in \sbf} \beta_Q \le C\sigma\big(Q(\sbf)\big)
= C\sigma(Q_0), \end{equation}
if $Q_0$ is in some stopping time regime. Then, by Lemma \ref{bilateralcorona.lem}(2),
\begin{multline*}
\sum_{Q \in \sbf} \beta_Q \, + \sum_{\sbf^*: Q(\sbf^*) \subset Q_0} \sum_{Q \in \sbf^*} \beta_Q
\\[4pt]
 \lesssim \sigma(Q_0) \,+ \sum_{\sbf^*: Q(\sbf^*) \subseteq Q_0} \sigma\big(Q(\sbf^*)\big) \lesssim (C_{\eta,K} + 1) \sigma(Q_0).
\end{multline*}
Combining this estimate with \eqref{bdcubbd.eq}, and using the
decomposition of $\dd_{Q_0}$ in \eqref{DQdecomp.eq}, proves
\eqref{betasgooden.eq} and hence the theorem. Thus, it suffices to verify
\eqref{sbfpackbeta.eq} and \eqref{sbfprimepackbeta.eq}. In the following we only prove \eqref{sbfpackbeta.eq} as the
only change needed when proving \eqref{sbfprimepackbeta.eq} is to change $\sbf^*$ to $\sbf$.

To prove \eqref{sbfpackbeta.eq} we use Proposition \ref{stopdomainconst.prop}. Fix $\sbf^*$ such that $Q(\sbf^*) \subset Q$ and let $\psi_{\eta,\sbf^*}^\pm$ be the functions from Proposition \ref{stopdomainconst.prop}. Then by Proposition \ref{stopdomainconst.prop}(1), if $Q \in \sbf^*$ then $U_Q = U_Q^+ \cup U_Q^-$ and hence
$\beta_Q = \beta_Q^+ + \beta_Q^-,$
where
\[\beta_Q^\pm := \iint_{U_Q^\pm}|\nabla_X v|^2  \delta(s,Y) \, dY\, ds. \]
Clearly it is enough to show that
\[\sum_{Q \in \sbf^*} \beta_Q^\pm \le C\sigma(Q(\sbf^*)).\]
We prove the estimate for the sum of the $\beta_Q^+$ leaving the straightforward modification needed to handle the sum of the $\beta_Q^-$ to the interested reader.
Moreover, since the $\{U_Q\}$, and hence the $\{U_Q^\pm\}$, have bounded overlap it is enough to prove that
\begin{equation}\label{unUQplusCME.eq}
\iint\limits_{\bigcup_{Q \in \sbf^*} U_Q^+}|\nabla_X v|^2 \delta(s,Y) \,  dY\, ds  \le C\sigma(Q(\sbf^*)).
\end{equation}

Let
\[\om_{\sbf^*}^+ = B\big((t_{\sbf^*}^+,X_{\sbf^*}^+), K^{7/8}\diam(Q_{\sbf^*})\big)
\cap \{ x_n > \psi^+_{\eta,\sbf^*}(t,x')\} \,, \]
\[ \widetilde{\om}_{\sbf^*}^+ = B\big((t_{Q_{\sbf^*}},X_{Q_{\sbf^*}}), K^{3/4}\diam(Q_{\sbf^*})\big)
\cap \{ x_n > \psi^+_{\eta,\sbf^*}(t,x')\}\,,\]
and
\[ \widehat{\om}_{\sbf^*}^+ = B\big((t_{Q_{\sbf^*}},X_{Q_{\sbf^*}}), M_0 K^{3/4}\diam(Q_{\sbf^*})\big)
\cap \{ x_n > \psi^+_{\eta,\sbf^*}(t,x')\}\,,\]
where we remind the reader that we use the coordinates $P_{\sbf^*} \oplus P_{\sbf^*}^\perp$
and the notation $\{ x_n > \psi^+_{\eta,\sbf^*}(t,x')\}$ means
\[\{(t,x',x_n): x_n > \psi^+_{\eta,\sbf^*}(t,x')\}.\]
We note that $\widehat{\om}_{\sbf^*}^+\subset \om_{\sbf^*}^+$, by \eqref{prop5a.eq}.
Proposition \ref{stopdomainconst.prop}(4) and \eqref{prop5b.eq}
ensure that $\om_{\sbf^*}^+$ is an open subset of
    $E^c$ and hence $v$ is a solution in $\om_{\sbf^*}^+$.
Applying
Lemma \ref{pullbackCME.lem} we have that
\begin{equation}\label{omplusCME.eq}
    \iint\limits_{\widetilde{\om}_{\sbf^*}^+}|\nabla_X v|^2  \,
     \tilde{\delta}(s,Y)\, dY\, ds \lesssim \diam(Q(\sbf^*))^{n+1} \approx \sigma(Q(\sbf^*)),
    \end{equation}
where $\tilde{\delta}(s,Y) = \dist((s,Y),\Gamma^+_{\sbf^*})$ and $\Gamma^+_{\sbf^*}$ is the
graph of $\psi_{\sbf^*}^+$. Note that if $Q \in \sbf^*$, then by
Proposition \ref{stopdomainconst.prop}(1),
we have that $U_Q^+ \in \widetilde{\om}^+_{\sbf^*}$. Moreover by Proposition \ref{stopdomainconst.prop}(2),
it holds that $\tilde{\delta}(s,Y) \approx \delta(s,Y)$ in $\cup_{Q \in \sbf^*} U_Q^+$. Thus,
\begin{multline}
\iint\limits_{\bigcup_{Q \in \sbf^*} U_Q^+}|\nabla_X v|^2 \, \delta(s,Y) \,  dY\, ds
\approx \iint\limits_{\bigcup_{Q \in \sbf^*} U_Q^+}|\nabla_X v|^2 \, \tilde{\delta}(s,Y) \, dY\, ds
\\ \le \iint\limits_{\widetilde{\om}_{\sbf}^+}|\nabla_X v|^2 \, \tilde{\delta}(s,Y) \lesssim  \sigma(Q(\sbf^*)),
\end{multline}
where we used \eqref{omplusCME.eq} in the last inequality. This proves \eqref{unUQplusCME.eq},
and the proof of the theorem is complete.
\end{proof}

The proof of Theorem \ref{PURCMEMT2.thrm} is nearly identical, the only difference being that in this case one needs to take $\W(\om)$, a Whitney decomposition of $\om$, instead of $\W(E^c)$. Modulo the following remark we leave the details to the interested reader.

\begin{remark}
As in the elliptic setting, Theorem \ref{PURCMEMT2.thrm} does {\it not} require the corkscrew condition. On the other hand,
the converse of the {\em elliptic} version of Theorem \ref{PURCMEMT2.thrm} \cite{GMT,AGMT} requires the additional assumption of interior corkscrews. Note that when carrying out the proof of Theorem \ref{PURCMEMT2.thrm}, without the corkscrew assumption  it may be the case that the Whitney regions $U_Q$ are empty for some cubes $Q \in \mathbb{D}(\pom)$, but this does not affect the analysis above.
\end{remark}

\section{Further remarks}\label{FR.sect}

In this section we make some remarks concerning possible extensions
and consequences of Theorem \ref{PURCMEMT.thrm} and the constructions in Proposition \ref{stopdomainconst.prop}. These
extensions and consequences can be proved, or, we expect that they can be proved,
largely using the tools already developed in the elliptic setting. Again, we believe that the main novelty of this paper is the approximation scheme, that is, Proposition \ref{stopdomainconst.prop}.

The first observation is that solutions to the heat equation are (locally) smooth and that $t$-derivatives of solutions are, in fact, solutions. This allows one to produce a Caccioppoli-type inequality for the $t$-derivative which, in turn, allows one to improve the Carleson measure estimate in Theorems \ref{PURCMEMT.thrm} and \ref{PURCMEMT2.thrm} to one that includes the $t$-derivative. In particular, under the same hypotheses as Theorem \ref{PURCMEMT.thrm} the estimate
\begin{equation}\label{PURCMEMTimprove.thrm}
\sup_{(t,X) \in E, r > 0} r^{-n-1}
\iint\limits_{B((t,X),r)} \left(|\nabla u|^2 + \delta(s,Y)^2|\partial_s u|^2  \right) \delta(s,Y) \,  dY \, ds \le C \|u\|_{L^\infty(E^c)}^2,
\end{equation}
holds with a constant $C$ depending only dimension and the parabolic UR constants for $E$.

The second observation is that the proof of Theorem \ref{PURCMEMT.thrm} uses essentially three properties of $u$:
\begin{itemize}
\item[(i)]  $u \in L^\infty(E^c)$,
\item[(ii)] the Caccioppoli's inequality of Lemma \ref{cacc}, and
\item[(iii)] the local square function estimate stated in Lemma \ref{pullbackCME.lem}.
\end{itemize}
If one wants to extend the validity of Theorem \ref{PURCMEMT.thrm} to more general parabolic equations in divergence form, 
\[\mathcal{L} = \partial_t - \div_X A(t,X) \nabla_X,\]
where $A$ is a $n\times n$ uniformly elliptic matrix, then some regularity conditions on the coefficients need to be imposed in order to guarantee property (iii). A natural sufficient condition is the parabolic analogue of the ``Kenig-Pipher condition"\footnote{In fact, in \cite[Lemma A.2]{HL-JFA}, a slightly more general class of coefficients is permitted.}. More specifically, this means that $A$ satisfies
$$|\nabla_XA(s,Y)|\delta(s,Y),\ |\partial_t A(s,Y)| \delta^2(s,Y)\in L^\infty(\mathbb R^{n+1}\setminus E),$$
where $\delta(s,Y)=\dist ( (s,Y), E )$, and that there exists a constant $M$ such that
\begin{align}\label{measure2}
\sup_{(t,X) \in E, r > 0} r^{-n-1}
\iint\limits_{ B( (t, X ),r)} |\nabla_XA(s,Y)|^2 \delta(s,Y) \,  d Y d s&\le  M ,\notag\\
\sup_{(t,X) \in E, r > 0} r^{-n-1}
\iint\limits_{ B( (t, X ),r)} |\partial_t A(s,Y)|^2 \delta^3(s,Y) \,  d Y d s&\le M.
\end{align}
In particular, our results apply to this class of coefficients.

A final observation is that it seems likely that some form of $\epsilon$- approximability \cite{HMM1,HMM2} should hold in this parabolic setting along with the corresponding quantitative Fatou theorem \cite{BH2}. In fact, it may be more reasonable to use the dyadic constructions from \cite{HMM1} in proving these results. Indeed, our construction here would provide some of the necessary initial estimates (Theorem \ref{PURCMEMT.thrm}), but it seems easier to deduce (parabolic) BV estimates using dyadic cubes. We also mention that is natural to use the estimate \eqref{PURCMEMTimprove.thrm} and to
prove $\epsilon$-approximability via a Carleson measure estimate which includes the $t$-derivative of the
approximator. Note that this estimate was not included in \cite{RN1} and therefore the
proof used in \cite[Proposition 4.3]{RN1} is valid only if one works with a vertical version of the non-tangential counting function $\mathcal{N}$ (or by a spatially non-tangential version based on time-slice cones with
$t$ fixed), rather than a fully non-tangential version.

\appendix
\section{Proof of Lemma \ref{regL121.lem}}

Here we give the proof of Lemma \ref{regL121.lem}.
The idea is to follow Stein's construction of the regularized distance \cite[Chapter VI, \textsection 1 \&  \textsection 2]{St-SIO} and to combine this with ideas from some of the estimates produced in \cite{DS1}.
\begin{proof}[Proof of Lemma \ref{regL121.lem}] let $d := n +1$ and  $Z = \{(t,x'): h(t,x') = 0\}$. Then $Z$ is closed since $h$ is continuous. We set $H(t,x') = 0$ for all $(t,x') \in Z$.

We need to define $H$ off of $Z$, and following \cite{DS1},
we begin by producing a Whitney-type
decomposition\footnote{In contrast to the usual Whitney decomposition, in which $h$ is the distance to a
closed set,
the present version remains valid even in the case that $Z$ is empty.}
of $Z^c$ with respect to $h$. For each $(t,x') \in Z^c$, we let $I_{(t,x')}$ be the largest closed (parabolic) dyadic cube containing $(t,x')$ and satisfying
\[\diam(I_{(t,x')}) \le (1/20) \inf_{(\tau,z') \in I_{(t,x')}} h(\tau,z').\]
Recall that the diameter is defined with respect to parabolic metric. To see that such a cube exists, set $r = h(t,x')/2$ and note, as $h$ is Lip(1/2,1) with constant $1$, that $h(t,x') \ge r$ in $B((t,x'),r)$. Therefore,  any cube which contains $(t,x')$ and which has diameter less than $r/20$ is a `candidate' for $I_{(t,x')}$.
We conduct this construction for each $(t,x') \in Z^c$, and
we enumerate the resulting maximal cubes
(without repetition) as $\{I_i\}_{i \in \Lambda}$. We note that
\begin{equation}\label{Iidiamh.eq}
10 \diam(I_i) \le h(t,x') \le 60 \diam(I_i), \quad \forall (t,x') \in 10I_i,\end{equation}
where $\kappa I$ is the parabolic dilation of $I$ by a factor of $\kappa$. Indeed,  if $(t,x') \in 10I_i$ then $(t,x')$ is at most a (parabolic) distance of $10\diam(I_i)$ from a point in $I_i$ and hence, using the selection criterion for $I_i$ and the Lip(1/2,1)-condition for $h$,
\[h(t,x') \ge \min_{(\tau,z') \in I_i} h(\tau,z') - 10\diam(I_i) \ge 10\diam(I_i).\]
To prove the upper bound in \eqref{Iidiamh.eq}, we note that if $I$ is the parent of $I_i$, then $I$ fails the selection criteria. Hence there exists $(\tau, z') \in I$ such that $h(\tau,z') < 20 \diam(I) = 40\diam(I_i)$, and as $h$ is Lip(1/2,1) with constant $1$ and $\dist((\tau,z'), (t,x')) \le 20 \diam(I_i)$, it follows that
\[h(t,x') \le h(\tau,z') + 20 \diam(I_i) \le 60\diam(I_i).\]
Using \eqref{Iidiamh.eq} we have that 
\begin{equation}\label{compnear.eq}\frac{1}{6} \diam(I_j) \le \diam(I_i) \le 6 \diam(I_j),
\end{equation}
whenever $10I_i \cap 10_j \neq \emptyset$. By comparing volumes, it follows that $\{10I_j\}$ have bounded overlap, with a constant depending on dimension alone, that is,
\begin{equation}\label{whitbbdol.eq}
\sum_{i \in \Lambda} 1_{I_i}(t,x') \le N, \quad \forall (t,x') \in \rn
\end{equation}
where $N= N(n)$.

Let $Q_0 = \{(t,x') \in \rn: |x'|_\infty \le 1/2, |t| < 1/4\}$ be the unit parabolic cube in $\rn$. Let $\varphi \in C_0^\infty(3Q_0)$, $0 \le \varphi \le 1$, $\varphi \equiv 1$ on $2Q_0$. Clearly the Lip(1/2,1) constant of $\varphi$ is bounded. For each $i \in \Lambda$, let $(t_i, x'_i)$ be the center of $I_i$ and $\ell(I_i)$ be the parabolic side length of $I_i$, that is, $\ell(I_i) = r_i/2$ and
\[I_i = \{(t,x'): |x - x_i|_\infty < r_i, |t -t_i| < r_i^2\}. \]
Then for $i \in \Lambda_i$ we set
\[\varphi_i(t,x') = \varphi \left(\frac{t-t_i}{\ell(I_i)^2},\frac{x-x_i}{\ell(I_i)}\right). \]
Then $0 \le \varphi_i \le 1$, $\varphi_i$ is supported in $3I_i$, $\varphi_i \equiv 1$ on $2I_i$,
$\varphi_i$ is Lip(1/2,1) with constant less than $\ell(I_i)^{-1} \approx_n \diam(I_i)^{-1}$,
and (on all of $\rn$) it holds
\[\ell(I_i)^{2m}|\partial_t^m \varphi_i| + \ell(I_i)|\nabla_{x'} \varphi_i| \approx_{n,m} \diam(I_i)^{2m}|\partial_t^m \varphi_i| + \diam(I_i)|\nabla_{x'} \varphi_i| \lesssim \tilde{c}_{n,m}. \]
We now define
\[H(t,x') := \sum_{i \in \Lambda} \diam(I_i) \varphi_i(t,x').\]
Using \eqref{Iidiamh.eq} we see that $3I_i$ does not meet $Z$ for any $i \in \Lambda$, and hence $H(t,x') = 0$ for all $(t,x') \in Z$.
By construction, if $(t,x') \in Z^c$ then $(t,x') \in I_j$ for some $j \in \Lambda$ and as $\varphi_j(t,x') = 1$, using also \eqref{Iidiamh.eq}, we have that
\[H(t,x') \ge \diam(I_j) \ge (1/60) h(t,x').\] This proves the lower bound in (1). To prove the upper bound in (1) we have by \eqref{compnear.eq} and \eqref{whitbbdol.eq}
\[H(t,x') \le \sum_{i: 3I_i \cap 3I_j \neq \emptyset} \diam(Q_j) \le 6N\diam(Q_i) \le (3/5)Nh(t,x'),  \]
where we used \eqref{Iidiamh.eq} in the last line.
Having proved (1), we see that the proof of (2) is very similar. For instance, using the bounds for the $t$-derivatives of $\varphi_i$, if $(t,x') \in Z^c$ then $(t,x') \in I_j$ for some $j$, and hence
\begin{multline*}
|\partial_t^mH (t,x')| \le \tilde{c}_{n,m} \sum_{i: 3I_i \cap 3I_j \neq \emptyset} \diam(I_i) \diam(I_i)^{-2m} \\[4pt]
\lesssim CN\diam(I_j)^{-2m +1} \approx h(t,x')^{-2m + 1}.
\end{multline*}
The bound for $|\nabla^m_{x'}H|$ has a similar proof. Finally, to see that $H$ is Lip(1/2,1) we have
\[
\begin{split}|H(t,x') - H(s,y')|
&\le \sum_{i: (t,x') \in 3I_i} \diam(I_i)|\varphi_i(t,x') - \varphi(s,y')|
\\ & \quad + \sum_{i: (s,y') \in 3I_i} \diam(I_i)|\varphi_i(t,x') - \varphi(s,y')|
\\ &\le 2c'N[|t-s|^{1/2} + |x' - y'|],
\end{split}\]
where we used that $\varphi_i$ is a Lip(1/2,1) function with constant $c'\diam(I_i)^{-1}$.

Now we get to the heart of the matter, that is, proving the half-order in time regularity of $H$ (this part is not in Stein's book, but rather draws a great deal of inspiration from \cite{DS1}). By the results in \cite[pp. 370-373]{HLN-BP} it suffices to verify
the Carleson measure estimate
\begin{equation}\label{geocarlest.eq}\tilde{\nu}(s,y', \rho) \le c'_4 \rho^{n+1}, \quad \forall(s,y') \in \rn, \rho > 0
\end{equation}
where
\[\tilde{\nu}(s,y',\rho):= \int_0^\rho \iint_{B((s,y'), \rho)} \hat{\gamma}(\tau,z', r)^2 d\sigma(\tau, z') \, dr/r, \]
where $d\sigma(\tau,z') = \sqrt{1 + |\nabla_{z'}H(\tau,z')|} dz'dt$ and
\[\hat{\gamma}(\tau, z', r) := \inf_{L} \left[r^{-d}\iint_{B((\tau,z'), r)} \left(\frac{H(t,x') - L(x')}{r} \right)^2 \,d\sigma(t,x')\right]^{1/2}, \]
where the infimum is taken over all affine functions $L$ of $x'$ only, and we recall that $d=n+1$.

The idea behind the proof of the estimate \eqref{geocarlest.eq} is as follows. If the scale $r$ is large with respect to $h(\tau,z')$, then $H$ is well approximated by just the linear function $0$, and if the scale is small with respect to $h(\tau,z')$ then, necessarily, $h(\tau,z') >0$ and $H$ is flat (below the scale $r$) by the derivative estimates (2) and therefore we can approximate $H$ by its ``$x'$-tangent plane".

Now let us begin the proof of \eqref{geocarlest.eq}. Fix $(s,y') \in \rn$ and $\rho > 0$. Set
$h_\rho(t,x') := \min\{h(t,x')/60,\rho\}$,
and write
\begin{align*}
\tilde{\nu}(s,y',\rho)&= \int_0^\rho \iint_{B((s,y'), \rho)} \hat{\gamma}(\tau,z', r)^2 d\sigma(\tau, z') \, dr/r
\\ & =
\iint_{B((s,y'), \rho)}\int_{h_\rho(\tau,z')}^\rho \hat{\gamma}(\tau,z', r)^2 d\sigma(\tau, z') \, dr/r
\\ & \quad + \iint_{B((s,y'), \rho)}\int_0^{h_\rho(\tau,z')} \hat{\gamma}(\tau,z', r)^2 d\sigma(\tau, z') \, dr/r
\\ &= T_1 + T_2.
\end{align*}
Let us handle term $T_2$ first. For $(\tau,z')$ and $r$ in the domain of integration, $r > 0$, and $h(\tau,z')\geq 60r$. In particular, $(\tau,z') \in I_j$ for some $j \in \Lambda$ and
for all such $j$ it holds that $I_j \cap B((s,y'),\rho) \neq \emptyset$, and $r \le \diam(I_j)$
(by the right hand estimate in \eqref{Iidiamh.eq}). Thus, we have
\begin{equation}\label{T2bd.eq}T_2 \le \sum_{j \in \widetilde{\Lambda}}\iint_{I_j \cap B((s,y'),\rho) }\int_0^{\min\{\diam(I_j),\rho\}} \hat{\gamma}(\tau,z',r)^2\, \frac{dr}{r} \, d\sigma(\tau,z'),
\end{equation}
where $\widetilde{\Lambda} = \{j \in \Lambda: B((s,y'),\rho) \cap I_j \neq \emptyset\}$.
Fix $j \in \widetilde{\Lambda}$, $(\tau,z') \in I_j$ and $r \leq \diam(I_j)$.
Using the affine function
$$L_{(\tau,z')}(x') = H(\tau,z') + \nabla_{z'} H(\tau,z')\cdot (x' - z'),$$ we find by Taylor's theorem, Lemma \ref{regL121.lem} (2) (already proved above), and \eqref{Iidiamh.eq}, that
\begin{equation}\label{hatgamloc.eq}
    \begin{split}
    \hat{\gamma}(\tau,z',r)^2 &\le r^{-d} \iint_{B((\tau,z'),r)} \left(\frac{|H(t,x') - L_{(\tau,z')}(x')|}{r} \right)^2 \, d\sigma(t,x')
    \\ & \lesssim r^2 \sup_{B({\bf x}_{I_j},2\diam(I_j))} [|\partial_tH|^2 + |\nabla^2 H|]^2
    \\ & \lesssim r^2 \sup_{B({\bf x}_{I_j},2\diam(I_j))} h^{-2}(t,x') \lesssim r^2 \diam(I_j)^{-2}\\
    & \lesssim r^2 (\min\{\diam(I_j), \rho\})^{-2},
\end{split}
\end{equation}
where ${\bf x}_{I_j}= (t_{I_j}, x'_{I_j})$ is the center of $I_j$.
Using \eqref{hatgamloc.eq} and \eqref{T2bd.eq} we obtain
\begin{multline*}
    T_2 \lesssim \sum_{j \in \widetilde{\Lambda}}
    \iint_{I_j \cap B((s,y'),\,\rho) }\int_0^{\min\{\diam(I_j),\,\rho\}} r^2 (\min\{\diam(I_j), \rho\})^{-2}\, \frac{dr}{r} \,d\sigma(\tau,z')
    \\
    \lesssim  \sum_{j \in \widetilde{\Lambda}}\iint_{I_j \cap B((s,y'),\,\rho)} 1 d\sigma(\tau,z')
    \\ \lesssim \sum_{j \in \widetilde{\Lambda}} \sigma\left(I_j \cap B\big((s,y'),\rho\big)\right) \lesssim
    \sigma\left(B\big((s,y'),\rho)\big)\right) \lesssim \rho^d,
\end{multline*}
as desired.

Having obtained the desired bound for $T_2$,
we turn our attention to $T_1$. For $(\tau, z') \in \rn$ and $r > 0$ we set
\[\Lambda'(\tau,z',r) := \{i \in I_i \cap B((\tau,z'), r) \neq \emptyset\}.\]
Note that in term $T_1$, we have $r \in (h(\tau,z')/60 , \rho)$, so that $h(\hat{\tau},\hat{z}') < 61r$ for all $(\hat{\tau},\hat{z}') \in B((\tau,z'), r)$ because $h$ has Lip(1/2,1) constant $1$.
Hence, by \eqref{Iidiamh.eq}, we have $\diam(I_i) \le 7r \le 7\rho$ for all $i \in \Lambda'(\tau,z',r)$. In particular,
since $(\tau,z') \in B((s,y'), \rho)$,
\begin{equation}\label{A.7}
\bigcup_{i \in \Lambda'(\tau,z',r)} I_i \subset B\big((s,y'), 10\rho\big).
\end{equation}
For $(\tau,z') \in B((s,y'), \rho) $, with $r \in (h(\tau,z')/60 , \rho)$,
we plug $L = 0$ into the definition of $\hat{\gamma}$ and
use property (1) (which we have already established) along with \eqref{Iidiamh.eq} to see
\begin{multline}\hat{\gamma}(\tau, z',r)^2 \le r^{-d} \iint_{B((\tau,z'),r)} \left(\frac{H(t,x')}{r}\right)^2 \, d\sigma(t,x')
\\ \lesssim \sum_{i \in \Lambda'(\tau,z',r)} r^{-d} \iint_{I_i} \diam(I_i)^{2}r^{-2} \, d\sigma(t,x')
\\ \lesssim \sum_{i \in \Lambda'(\tau,z',r)} \left(\frac{\diam(I_i)}{r}\right)^{d+2} \lesssim \sum_{i \in \Lambda'(\tau,z',r)} \left(\frac{\diam(I_i)}{r}\right)^{d+1},
\end{multline}
where we used the fact that $\diam(I_i) \lesssim r$ in the estimate on the last line.  Thus, if we let
\[
\Lambda_0:= \{i \in \Lambda: I_i \subset B((s,y'), 10\rho),\ \diam(I_i) \le 7 \rho\},
\]
then using \eqref{A.7}, the definition of $ \Lambda'(\tau,z',r)$,
and again using the fact that $\diam(I_i) \leq 7r$ for $i \in \Lambda'(\tau,z',r)$, we obtain
\begin{align*}T_1 &\le \iint_{B((s,y'),\rho)} \int_{h(\tau, z')/60}^\rho \sum_{i \in \Lambda'(\tau,z',r)} \diam(I_i)^{d+1} \frac{dr}{r^{d+2}} \, d\sigma(\tau,z')
\\ & \lesssim \sum_{i \in \Lambda_0} \diam(I_i)^{d+1} \int_{\diam(I_i)/7}^\rho \int_{\dist((\tau,z'),I_i) < r} 1 \, d\sigma(\tau, z') \, \frac{dr}{r^{d+2}}
\\ & \lesssim \sum_{i \in \Lambda_0} \diam(I_i)^{d+1} \int_{\diam(I_i)/7}^\infty  
\frac{dr}{r^2}  \, \lesssim \, \sum_{i \in \Lambda_0} \diam(I_i)^{d}\, \lesssim\, \rho^d.
\end{align*}
This yields the desired Carleson measure estimate, and concludes the proof of the lemma.
\end{proof}

\end{document}